\numberwithin{equation}{section}
\newtheorem{theorem}{Theorem}[section]
\newtheorem*{theorem*}{Theorem}
\newtheorem{lemma}[theorem]{Lemma}
\newtheorem{proposition}[theorem]{Proposition}
\newtheorem{corollary}[theorem]{Corollary}
\theoremstyle{definition}
\newtheorem{definition}[theorem]{Definition}
\newtheorem*{dirichletmap}{Dirichlet's Map}
\theoremstyle{remark}
\newtheorem*{remark}{Remark}
\newtheorem*{remarks}{Remarks}
\newtheorem{example}{Example}
\DeclareMathOperator{\len}{\mathrm{length}}
\DeclareMathOperator{\SL}{\mathrm{SL}}
\DeclareMathOperator{\R}{\mathrm{R}}
\newcommand{\ca}{\textcolor{SpringGreen}}
\newcommand{\cb}{\textcolor{blue}}
\newcommand{\Kum}{\mathrm{Kum}}
\renewcommand{\theenumi}{\roman{enumi}}
\begin{document}
\title[Minimal periods in Zagier's reduction theory]{Constructing minimal periods of quadratic irrationalities in Zagier's reduction theory} 
\author{Barry~R. Smith}
\email{barsmith@lvc.edu}
\address{Department of Mathematical Sciences\\ Lebanon Valley College\\ 101 N. College Avenue, Annville, PA 17003}
\keywords{binary quadratic form; reduction theory; continued fraction}
\thanks{I wish to thank Keith Matthews for publishing tools for computing with binary quadratic forms at \texttt{http://numbertheory.org}.  They were an invaluable resource during this project.}

\begin{abstract}
 Dirichlet's version of Gauss's reduction theory for indefinite binary quadratic forms includes a map from Gauss-reduced forms to strings of natural numbers.  It attaches to a form the minimal period of the continued fraction of a quadratic irrationality associated with the form.  When Zagier developed his own reduction theory, parallel to Dirichlet's, he omitted an analogue of this map.  We define a new map on Zagier-reduced forms that serves as this analogue.  We also define a map from the set of Gauss-reduced forms into the set of Zagier-reduced forms that gives a near-embedding of the structure of Gauss's reduction theory into that of Zagier's.  From this perspective, Zagier-reduction becomes a refinement of Gauss-reduction. 
\end{abstract}

\maketitle

\section{Introduction}\label{S:intro}
 Dirichlet's exposition \cite{pD1854} of Gauss' reduction theory for indefinite binary quadratic forms is still the essence of the standard treatment. The core idea is to prove the main theorems by translating reduction into the regular continued fraction expansion of a quadratic irrationality.  In 1981, Zagier published an alternative reduction theory \cite{dZ1981}; his exposition is similar to Dirichlet's, replacing regular continued fractions with negative continued fractions, but it omits an important piece.   This article fills in the gap.\\[0.0cm]

\noindent \emph{Background:} A binary quadratic form is a polynomial $Ax^2 + Bxy + Cy^2$, and we assume throughout that the coefficients are rational integers.  We will typically represent a form with the shorter notation $(A,B,C)$.  Its \emph{discriminant} is $\Delta = B^2 - 4AC$, and we will only consider \emph{indefinite} forms -- those with positive, nonsquare discriminant. We call $\gcd(A,B,C)$ the \emph{content} of the form.

\begin{definition}\label{D:reduced}
An indefinite form $(A,B,C)$ is \textbf{G-reduced} if 
\begin{alignat*}{2}
	AC &< 0, & \qquad B &> |A+C|.\\
\intertext{It is \textbf{Z-reduced} if}
	A, B, C &> 0, & \qquad B &> A+C.
\end{alignat*}
\end{definition}

These are the reduced forms of Gauss and Zagier.  This definition of G-reduced forms is not the most common one, but its equivalence with Gauss's definition goes back at least to Frobenius \cite{gF1913}. We use the following notation:
\begin{align*}
	G &= \{ \, (A,B,C) \colon (A,B,C) \text{ is $G$-reduced} \, \},\\
	G^+ &= \{ \, (A,B,C) \colon (A,B,C) \in G \text{ and } A > 0 \, \},\\
	Z &= \{ \, (A,B,C) \colon (A,B,C) \text{ is $Z$-reduced} \, \}.
\end{align*}

	

We recall that every rational number $\tfrac{\alpha}{\beta} > 1$ can be expanded in two ways as a finite regular continued fraction
\begin{equation*}
	\frac{\alpha}{\beta} = q_1 + \cfrac{1}{q_2 + \cfrac{1}{\ddots \, + \cfrac{1}{q_l}}}.\\[0.1cm]
\end{equation*}
with positive integer {partial quotients} $q_1, \ldots, q_l$.  The numbers of quotients appearing in the two expansions have opposite parities: if $q_l > 2$, then the second expansion has partial quotients $(q_1, q_2, \ldots, q_l - 1, 1)$.  We recall also that if $\Delta$ is a positive, nonsquare integer, then the Pellian equation $|t^2 - \Delta u^2| = 4$ always has solutions in positive integers $(t,u)$.  The \emph{fundamental solution} is that with minimal $u$.

We now define a map that is already in Dirichlet (\cite{pD1854}, or see Section 83 of \emph{Vorlesungen \"{u}ber Zahlentheorie}): 

\begin{dirichletmap}
Let  $S_1$ be the set of nonempty \emph{natural strings} (i.e., finite sequences of natural numbers). Define
\begin{equation*}
	\gamma \colon G^+ \rightarrow S_1
\end{equation*}
as follows.  Suppose $f = (A,B,C) \in G^+$ has discriminant $\Delta$. Let $(t,u)$ be fundamental solution of the Pellian equation $|t^2 - \Delta u^2| = 4$. Set $z = \frac{t+Bu}{2}$ (an integer) and expand the rational number $\tfrac{z}{Au}$ in a finite continued fraction, choosing between the two possible expansions by requiring the parity of the number of partial quotients to be odd if $t^2 - \Delta u^2 = -4$ and even otherwise.  Define $\gamma(f)$ to be the resulting sequence of partial quotients.
\end{dirichletmap}

\begin{example}
Let $f = x^2 + 3xy - 2y^2 \in G^+$, which has discriminant 17.  The equation $t^2 - 17 u^2 = -4$ is solvable with fundamental solution $(t,u) = (8,2)$.  We compute $z = 7$ and expand $7/2$ in a continued fraction of odd length to obtain $\gamma(f) = (3,1,1)$. 
\end{example}

We define now equivalence relations on the set of strings over an alphabet and on the set of indefinite forms.  A combinatorial necklace over an alphabet $\Sigma$ is an equivalence class of finite strings over $\Sigma$ under cyclic permutation.  The name arises from the depiction, as in Figure \ref{F:stringnecklace}, of the necklace represented by the string $s_1 \cdots s_k$ as a sequence of {beads} evenly spaced around a circle.  
\begin{figure}[h]
\centering
\includegraphics[scale=0.42, trim={0cm 0cm 0cm 0}]{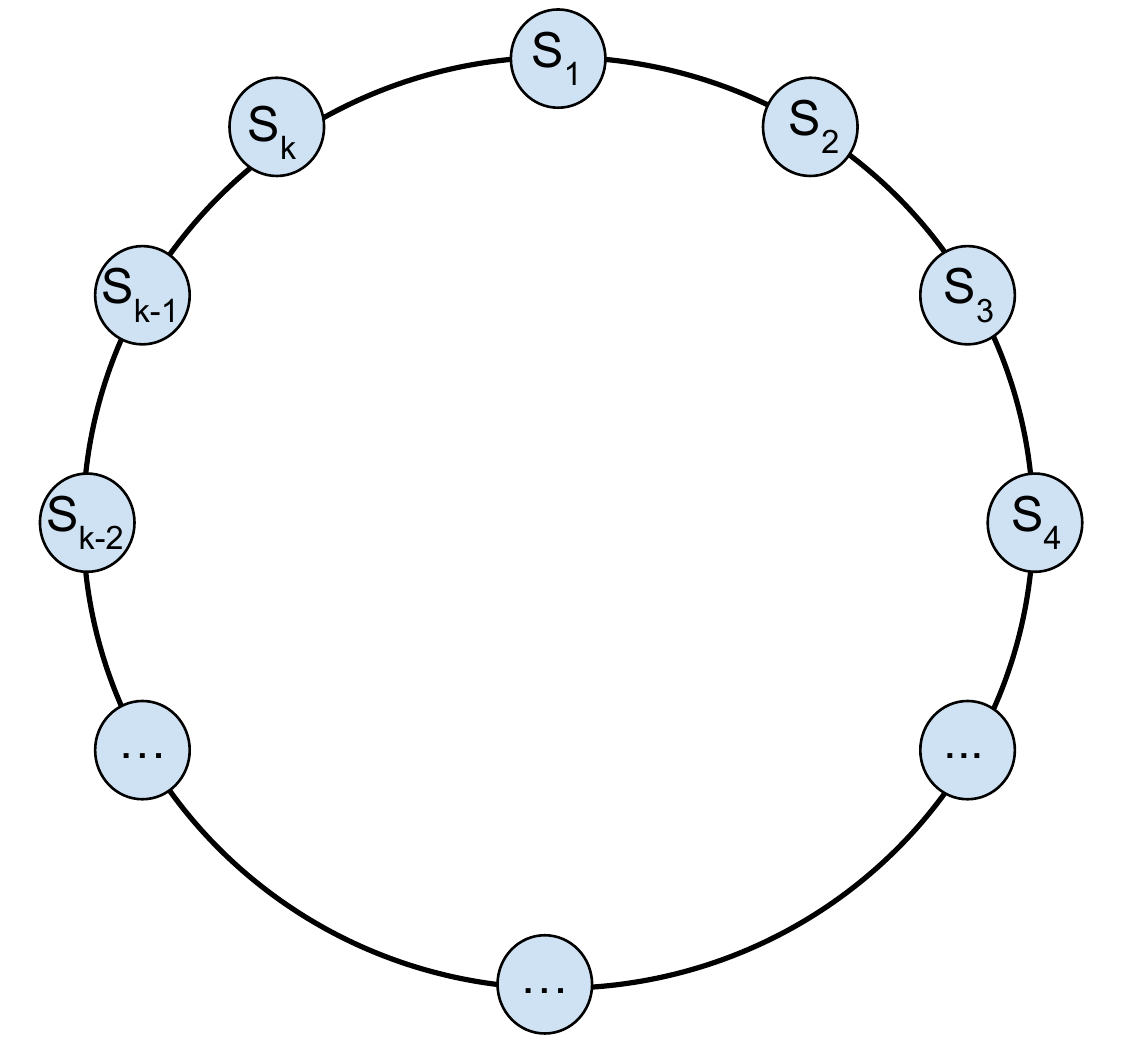}
\captionof{figure}{The necklace of the string $s_1 \cdots s_k$}
\label{F:stringnecklace}
\end{figure}
We will consider \emph{natural necklaces}, which have natural number beads, and \emph{binary necklaces}, whose beads are 0's and 1's. A nonempty string of length $n$ is \emph{primitive} if the cyclic group of order $n$ acts freely upon it.  A \emph{primitive necklace} is one whose associated strings are primitive.  Equivalently, a primitive necklace is one with no nontrivial rotational symmetry in the above depiction.

A matrix $\left[ \begin{smallmatrix} \alpha & \beta \\ \gamma & \delta \end{smallmatrix} \right]$ in $\SL_2(\mathbb{Z})$ acts on a form $f(x,y)$ as
\begin{equation*}
	f(x,y) \mapsto f(\alpha x + \beta y, \gamma x + \delta y).
\end{equation*}
Through this right-action, the set of binary quadratic forms is partitioned into equivalence classes.  All forms in a class have the same content and discriminant.  A form is \emph{primitive} if it has relatively prime coefficients.  A \emph{primitive class} is one whose forms are all primitive.

We use a subscript `$p$' on a set of forms/strings to indicate the corresponding subset of primitive forms/strings, e.g., $_{p}S_1$ denotes the set of all primitive natural strings.

We denote with $R_G$ Gauss's reduction operator on indefinite forms (see Section \ref{S:GZcompare} for the definition).  The operator $R_G$ is a permutation of the set $G$, and $R_G \circ R_G$ restricts to an permutation of $G^+$.  If $\mathfrak{c}$ is a class of forms of positive discriminant, then $\mathfrak{c} \cap G$ is nonempty and $R_G$ restricts to a permutation of $\mathfrak{c} \cap G$.   

The map $\gamma$ has many useful properties, including: 
\begin{enumerate}
	\item If $(A,B,C) \in {_{p}G^+}$ has discriminant $\Delta$, then the quadratic irrationality $\frac{B+\sqrt{\Delta}}{2A}$ has a purely periodic continued fraction and $\gamma(f)$ is its minimal period.
	  \addtocounter{enumi}{-1}
  \renewcommand\theenumi{\arabic{enumi}$^{\prime}$}
  \item $\gamma$ restricts to a bijection  $\gamma \colon {_{p}G^+} \rightarrow {_{p}S_1}$.
  \renewcommand\theenumi{\arabic{enumi}} 
	\item $R_G \circ R_G$ transports through $\gamma$ to a cyclic permutation of the corresponding natural strings.
\end{enumerate}

Property 1 was the inducement for introducing $\gamma$ to the theory.  Dirichlet works with a slightly less general map than the one given above, but a proof of the property as we have stated it is already in Weber's \emph{Lehrbuch der Algebra, Volume 1} (or see \cite[proof of Theorem 2.2.9]{fHK2013}). Property $1^{\prime}$ is an immediate consequence of Property 1.  
Property 2 shows that $\gamma$ induces a well-defined map from classes of indefinite forms  of positive discriminant to natural necklaces, and Property $1^{\prime}$ shows that this induces a map from primitive classes to primitive necklaces. \\[0.2cm]

\renewcommand{\theenumi}{\roman{enumi}}



\noindent \emph{Summary of results:} The purpose of this article is to provide an analogue of $\gamma$ for Zagier-reduced forms, a map which we call $\sigma$.  
Actually, we define two maps:
\begin{align*}
	\beta &\colon Z \rightarrow S_2\\
	\sigma &\colon Z \rightarrow B_1,
\end{align*} 
where
\begin{align*}
	S_2 &= \text{the set of natural strings of length $\geq 2$},\\ 
	B_1 &= \text{the set of binary strings with at least one 1}.
\end{align*}

\begin{definition}\label{D:beadseq}
Suppose $f = (A,B,C) \in Z$ has discriminant $\Delta$.  Let $(t,u)$ be the {fundamental solution} of the Pellian equation $|t^2 - \Delta u^2| = 4$.  Set $z = \frac{t+Bu}{2}$ (an integer) and expand the rational number $\tfrac{z}{z-Au}$ into a continued fraction, choosing between the two possible expansions by requiring the parity of the number of partial quotients to be even if $t^2 - \Delta u^2 = -4$ and odd otherwise.  Define $\beta(f)$ to be the resulting sequence of partial quotients.
We call $\beta(f)$ the \textbf{bead sequence} of $f$.
\end{definition}
The differences with Dirichlet's map $\gamma$ are that the input form $f$ is Z-reduced instead of G-reduced, the fraction expanded in a continued fraction has denominator $z-Au$ instead of $Au$, and the parity of the number of quotients is switched.  The proof of Theorem \ref{T:formfrombeads} shows, as indicated above, that the image $\beta(Z)$ is contained in $S_2$, i.e., no bead sequence has a single element. 

Before defining $\sigma$, a notational remark is in order.  We will refer to natural strings as ``strings'', but we use the more common sequence notation $(q_1, \ldots, q_l)$. We will also use binary strings and write them with the standard juxtaposition notation $q_1 \cdots q_l$.   

We define now a ``stars-and-bars'' map $\mathrm{sb} \colon S_2 \rightarrow B_1$.  Given $(q_1, \ldots q_l) \in S_2$, we arrange a sequence of $n = q_1 + \cdots + q_l$ stars in a row, then place $l-1$ bars in gaps between the stars so that the stars are divided into bunches of size $q_1$, \ldots, $q_l$ as we move left to right.  Beginning with an empty string, we traverse the stars from left to right, examining the gaps between stars and appending $0$ at each empty gap and $1$ at each bar.  The resulting binary string is $\mathrm{sb}((q_1, \ldots q_l))$.

\begin{definition}
If $f \in Z$, define $\sigma (f) = \mathrm{sb} (\beta(f))$.
\end{definition}

\begin{example}
Let $f = x^2 + 5xy + 2y^2 \in Z$, which has discriminant $17$.  The equation $t^2 - 17 u^2 = -4$ is solvable with fundamental solution $(t,u)=(8,2)$.  We compute $z=9$ and expand $9/7$ into a continued fraction of even length to obtain the bead sequence $\beta(f) = (1,3,1,1)$. The stars and bars are
\begin{equation*}
	\star \, \, \mid \, \, \star \quad \star \quad \star \, \, \mid \, \, \star \, \, \mid \,  \, \star
\end{equation*}
and the resulting binary string is
\begin{equation*}
	\sigma(f) = 10011.
\end{equation*}
\end{example}

The main results of this article are:
\begin{description}
	\item[Theorems \ref{T:rotation}, \ref{T:primitivity}, and \ref{T:Denjoy},]  which give the analogues for $\sigma$ of, respectively, Properties 2, $1^{\prime}$, and 1 of $\gamma$. 
	\item[Theorem \ref{T:xi},] which puts $\gamma$ and $\sigma$ in a commutative diagram that clarifies the link between Gauss- and Zagier- reduction. 
	\item[Theorem \ref{T:reductionrelation},] which describes how the reduction operators transport through the maps in the diagram.
\end{description}

The analogue for $\sigma$ of Property 1 states that that if $f \in {_{p}Z}$, then $\sigma(f)$ essentially gives the minimal period of the \emph{Denjoy continued fraction} \cite[pp.11-12]{aD1942}, \cite{IK2003} of a quadratic irrationality associated with $f$. Denjoy continued fractions are defined at the beginning of Section \ref{S:Denjoy}.

The analogue of Property 2 shows that Zagier-reduction translates through $\sigma$ to cyclic permutation of the corresponding binary string. Thus, $\sigma$ induces a map $\overline{\sigma}$ from classes of indefinite forms to binary necklaces, and the analogue of $1^{\prime}$ shows that this restricts to a map  from primitive classes to primitive binary necklaces.  The \emph{Z-caliber} of a class is the number of Z-reduced forms in the class.  Theorem \ref{T:primitivity} shows that $\overline{\sigma}$ is surjective, and Corollary \ref{C:Zcaliber} states that the number of beads in a primitive binary necklace is the sum of the Zagier-calibers of the primitive classes in its preimage.

Recently, Uluda\u{g}, Zeytin, and Durmu\c{s} (UZD) gave another construction of binary necklaces attached to classes of indefinite forms  \cite{UZD2016} . With each class, they associate a graph called a \emph{\c{c}ark}.  A \emph{\c{c}ark} is a quotient of a Conway topograph \cite{jC1997}, formed in such a way that the periodic river in the topograph becomes a cycle in the \c{c}ark.  We can assign an edge on the cycle to each Gauss-reduced form in the form class of the \c{c}ark.  Beginning at such an edge and traveling around the cycle, we encounter branches splitting off to the right and left.  Assigning a $1$ to branches in one direction and $0$ to branches in the other, each class then gives rise to a binary necklace.  These are not the necklaces produced by $\sigma$.

Indeed, UZD also construct a natural string by counting runs of consecutive branches that all split off in the same direction and recording a natural number for each count.   Upon translating their construction into arithmetic,  one sees that it begins the same as Dirichlet's map but expands $z/Au$ in a negative continued fraction rather than a regular one.  The rest of the UZD algorithm then amounts to performing the standard conversion from a negative continued fraction expansion to its regular continued fraction \cite{mS1864}.  Ultimately, the UZD construction is a graphical version of Dirichlet's map.

To connect the maps $\gamma$, $\beta$ and $\sigma$, we define a map $\mu \colon {G} \rightarrow Z$ by
\begin{equation}\label{E:mu}
	\mu((A,B,C)) = \begin{cases}
		(A, 2A + B, A + B + C), \qquad &\text{if $A > 0$}\\
		(A+B+C, B+2C, C), \qquad &\text{if $C > 0$.}
		\end{cases}
\end{equation}
This sends forms to equivalent forms.  Restricting $\mu$ to forms with $A > 0$ gives an injective map, as does restricting it to forms with $C > 0$, so the preimage of each form in Z has size $\leq 2$. On the other hand, there are infinitely many Z-reduced forms outside the image of $\mu$, such as $(3,7,3)$.   

Let $\eta \colon S_1 \rightarrow S_2$ be the map that prepends `1' to a string in $S_1$.  Theorem \ref{T:xi} shows that the following diagrams commute:
\begin{equation}\label{E:diagramsalt}
\begin{tikzcd}
    G^+ \arrow[hookrightarrow]{r}{\mu} \arrow{d}{\gamma} & Z \arrow{d}{\beta} \\ S_1 \arrow[hookrightarrow]{r}{\eta} & S_2
  \end{tikzcd} \qquad 
  \begin{tikzcd}
    G^+ \arrow[hookrightarrow]{r}{\mu} \arrow{d}{\gamma} & Z \arrow{d}{\sigma} \\ S_1 \arrow[hookrightarrow]{r}{\mathrm{sb} \, \circ \, \eta} & B_1
  \end{tikzcd}
\end{equation}
 
The maps $\mu$, $\eta$ and $\mathrm{sb}$ are injections.  Theorem \ref{T:reductionrelation} shows that Gauss-reduction translates through the map $\mu$ as a ``multiple of a Zagier-reduction''.  Through these diagrams, then, Zagier's theory is seen as  a refinement of Gauss's.

A final note: the present article uses and extends results developed in \cite{bS2014}.  Several conjectures were made in that article -- all of them are easy consequences of the results herein.


\section{Stringing necklaces}\label{S:construction}



In this section, we show that Zagier reduction transports through $\sigma$ to cyclic permutation of the corresponding binary string and deduce some consequences.

Zagier's reduction algorithm \cite{dZ1981} proceeds by iterating the operator that sends the form $f=(A,B,C)$ with discriminant $\Delta$ to the equivalent form $f(nx+y,-x)$, where $n$ is the ``reducing number'', defined using ceiling notation
\begin{equation}\label{E:reducingnumber}
	n = \left\lceil \frac{B+\sqrt{\Delta}}{2A} \right\rceil.
\end{equation}
We denote the Zagier-reduction operator by $R_Z$.



 
\begin{theorem*}{(Zagier, \cite[\S 13]{dZ1981})}
The orbit of an indefinite form under Zagier-reduction is eventually periodic. A form is Z-reduced if and only if its orbit is purely periodic, hence forms a cycle. There are finitely many Z-reduced forms of given discriminant, and each class of indefinite forms contains a unique cycle of Z-reduced forms.
\end{theorem*}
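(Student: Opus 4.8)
The plan is to translate Zagier-reduction into the negative (``minus'') continued fraction algorithm applied to a quadratic irrationality attached to the form, exactly paralleling Dirichlet's translation of Gauss-reduction into the regular continued fraction algorithm. First I would record the explicit action of $R_Z$. The reducing substitution is the matrix $\left[\begin{smallmatrix} n & 1 \\ -1 & 0\end{smallmatrix}\right] \in \SL_2(\mathbb{Z})$, so a direct computation gives
\[
	R_Z((A,B,C)) = (An^2 - Bn + C,\ 2An - B,\ A), \qquad n = \left\lceil \tfrac{B+\sqrt{\Delta}}{2A}\right\rceil,
\]
which visibly preserves the discriminant and the $\SL_2(\mathbb{Z})$-class. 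Writing $w = \frac{B+\sqrt{\Delta}}{2A}$ for the larger root of $Ax^2 - Bx + C$, the same computation shows that $R_Z$ acts on roots by $w \mapsto w' = \frac{1}{\lceil w\rceil - w}$. Since $\lceil w\rceil - w \in (0,1)$ for any non-integer real $w$, we always get $w' > 1$, so after the first step every reducing number is $\geq 2$: this is precisely one step of the minus continued fraction of $w$. For fixed $\Delta$, and with the normalization $A>0$, the root $w$ determines the form (its Galois conjugate $\bar w$ gives $B/A$ and $C/A$, and $\Delta$ then recovers $A$), so the dictionary form $\leftrightarrow$ root is a bijection and the whole statement becomes a statement about minus continued fractions.

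Next I would prove finiteness and the reduced dictionary. If $(A,B,C) \in Z$ then $A, C \geq 1$ and $B \geq A + C + 1$, whence
\[
	\Delta = B^2 - 4AC \geq (A+C+1)^2 - 4AC = (A-C)^2 + 2(A+C) + 1 \geq 2(A+C) + 1,
\]
so $A + C \leq \frac{\Delta-1}{2}$ and $B = \sqrt{\Delta + 4AC}$ are bounded; there are finitely many $Z$-reduced forms of discriminant $\Delta$. A short inequality check (using $AC > 0$, so $\sqrt{\Delta} < B$) shows that $(A,B,C) \in Z$ is equivalent to the minus-reduced condition $w > 1$ and $0 < \bar w < 1$ on its root. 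Since for such $w$ one has $n = \lceil w\rceil \geq 2$ and $\bar{w'} = \frac{1}{n - \bar w} \in (0,1)$, the minus step carries reduced roots to reduced roots, giving $R_Z(Z) \subseteq Z$; and it is invertible there, since $\bar w = n - 1/\bar{w'}$ with $n = \lceil 1/\bar{w'}\rceil$ recovers the predecessor. Thus $R_Z$ restricts to a permutation of the finite set of $Z$-reduced forms of each discriminant, so the orbit of any $Z$-reduced form is purely periodic.

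It remains to show that every orbit is eventually periodic and, conversely, that a purely periodic orbit consists of $Z$-reduced forms. For the former I would run the standard termination argument for the minus continued fraction: after one step $w_1 > 1$, and a monotonicity argument tracking the conjugate $\bar w_k$ drives it into the interval $(0,1)$ after finitely many steps, after which it remains there; once the root is minus-reduced the orbit lies in the finite set $Z$ and is therefore periodic. Since the iterates $R_Z^m(f)$ are $Z$-reduced for all large $m$, any $f$ with a purely periodic orbit equals $R_Z^{m}(f)$ for arbitrarily large $m$ and is therefore itself $Z$-reduced; this yields the equivalence ``$Z$-reduced $\iff$ purely periodic.'' Each class meets $Z$ because reducing any one of its forms lands in $Z$, so each class contains at least one cycle.

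The one genuinely delicate point—and the step I expect to be the main obstacle—is the \emph{uniqueness} of the cycle within a class. Equivalent forms have $\SL_2(\mathbb{Z})$-equivalent roots, so I would reduce uniqueness to the Lagrange-type assertion that two minus-reduced quadratic irrationalities which are equivalent have minus continued fraction expansions that are cyclic permutations of one another; this forces the corresponding $Z$-reduced forms into a single $R_Z$-cycle. Establishing this equivalence statement for minus continued fractions, rather than merely invoking the classical regular-continued-fraction version, is where the real work lies, since it requires controlling how the reducedness interval $(0,1)$ interacts with the modular action. Everything else is either an explicit $\SL_2(\mathbb{Z})$ computation or the finiteness bound above.
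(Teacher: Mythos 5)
First, a point of reference: the paper does not prove this statement at all --- it is quoted verbatim from Zagier \cite[\S 13]{dZ1981} --- so there is no internal proof to compare against. Your reconstruction follows the same route as the cited source: translate $R_Z$ into the minus continued fraction algorithm on the root $w=\frac{B+\sqrt{\Delta}}{2A}$, characterize membership in $Z$ by $w>1$ and $0<\overline{w}<1$, get finiteness from $A+C\le\frac{\Delta-1}{2}$, and show that the reduction step carries reduced roots to reduced roots and is invertible there, so $R_Z$ permutes the finite set of Z-reduced forms of each discriminant. All of this is correct, and your termination sketch can be made precise: after the first step $w_k>1$ and $n_k\ge 2$, so once $\overline{w}_k<1$ one has $\overline{w}_{k+1}\in(0,1)$ permanently; while $\overline{w}_k>1$ and lies in the same unit interval $(n_k-1,n_k)$ as $w_k$, the identity $A_{k+1}=A_k(n_k-w_k)(n_k-\overline{w}_k)$ forces $|A_{k+1}|<|A_k|$, and in the remaining case $\overline{w}_{k+1}<1$ already; since $|A_k|$ is a positive integer this terminates.

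The genuine gap is the one you flag yourself: uniqueness of the cycle within a class. You reduce it to the assertion that two equivalent minus-reduced quadratic irrationalities have minus continued fraction expansions that are cyclic shifts of one another, but you do not prove that assertion, and it is not a formal consequence of anything earlier in your argument --- it \emph{is} the uniqueness claim, merely restated on the continued-fraction side. As written, the proposal establishes eventual periodicity, the equivalence ``Z-reduced $\iff$ purely periodic,'' finiteness, and the existence of at least one cycle per class, but not that a class cannot contain two disjoint cycles. To close it one must show that if $w$ and $w'$ are both minus-reduced and $w'=\frac{aw+b}{cw+d}$ with $\left[\begin{smallmatrix} a & b\\ c & d\end{smallmatrix}\right]\in\SL_2(\mathbb{Z})$, then this matrix is, up to sign, a product of the consecutive step matrices $\left[\begin{smallmatrix} n_k & -1\\ 1 & 0\end{smallmatrix}\right]$ arising from the expansion of $w$; the proof pins down the entries using the constraints $w>1$, $0<\overline{w}<1$ and inducts on $|c|$ (or peels off partial quotients). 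This is a real argument of comparable length to everything you have written, not a citation-sized step, so the proposal is incomplete until it is supplied.
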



\begin{theorem}\label{T:rotation}
Suppose that $f$ is a Z-reduced form. The string $\sigma(R_Z \cdot f)$ is obtained from $\sigma(f)$ by cycling as follows:
\begin{enumerate}
	\item If $\sigma(f)$ begins with a `0', then the initial `0' is removed from the front and appended to the back.
	\item If $\sigma (f)$ has just one initial `1' and the succeeding characters are all `0', then the initial '1' is removed from the front and appended to the back.
	\item If $\sigma (f)$ begins with a `1' and contains at least two `1's, then the string is cycled so that the second `1' is moved to the final position.
\end{enumerate}
\end{theorem}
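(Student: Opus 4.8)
The plan is to show that $R_Z$ acts on $\sigma(f)$ as a single cyclic rotation whose size is governed by the reducing number $n$, and then to unwind that rotation into the three stated cases. First I would record the reduced form explicitly: substituting into $f(nx+y,-x)$ gives
\[
 R_Z\cdot f = (An^2 - Bn + C,\; 2An - B,\; A),
\]
so the discriminant, and hence the fundamental solution $(t,u)$ used to build $\beta$, is unchanged. Writing $w_f = \frac{B+\sqrt{\Delta}}{2A}$, a short computation using $C' = A$ and $B' = 2An-B$ shows that the root of $R_Z\cdot f$ equals $\frac{1}{n - w_f}$; that is, $R_Z$ is exactly the shift operator of the negative (minus) continued fraction of $w_f$, with $n = \lceil w_f\rceil$ its leading partial quotient. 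This is the conceptual engine: because the matrix $M = \left[\begin{smallmatrix} n & 1 \\ -1 & 0\end{smallmatrix}\right]$ effecting the reduction is the negative-continued-fraction generator, conjugating the fundamental automorph of $f$ (whose bottom row is $(Au,\,z)$) by $M$ advances the periodic negative continued fraction of $w_f$ by one partial quotient.

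Second, I would translate this one-quotient advance into an operation on the binary string. The bead sequence $\beta(f)$ is the regular continued fraction of $z/(z-Au)$, and the stars-and-bars map is precisely the classical dictionary converting between regular and negative continued fractions (the conversion recorded in \cite{mS1864} and already invoked in the discussion of UZD). Under this dictionary a negative-CF partial quotient $b$ accounts for a block of $b-1$ symbols of $\sigma(f)$, so removing the leading quotient $b_0 = n$ rotates $\sigma(f)$ to the left by $n-1$ positions, read modulo the length $k = \big(\sum_i q_i\big) - 1$ (this length is invariant, since cyclic rotation preserves length and the number of $1$'s equals $l-1$). I therefore expect the crisp intermediate statement: $\sigma(R_Z\cdot f)$ is $\sigma(f)$ cycled left by $n-1$.

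Third, I would read $n$ off the front of $\sigma(f)$ to recover the three cases. Comparing $n=\lceil w_f\rceil$ with the leading quotients of $z/(z-Au)$ should yield that $n = 2$ exactly when $\sigma(f)$ begins with $0$ (equivalently $q_1 \ge 2$), giving the shift-by-one of case (i); and that $n = q_2 + 2$ when $\sigma(f)$ begins with $1$ (equivalently $q_1 = 1$), so that $n-1 = q_2+1$ is exactly the position of the second $1$. When there are at least two $1$'s this is case (iii), the second $1$ being carried to the final slot; when there is a single $1$ the length is $k = q_2$ and $n-1 \equiv 1 \pmod{k}$ collapses the rotation to case (ii).

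The hard part will be the two exact comparisons underlying the second and third steps. Making the regular/negative dictionary precise at the two ends of the (linear, not yet cyclic) string $\sigma(f)$ is what separates case (i) from cases (ii)--(iii); and pinning down $n$ in terms of $q_1,q_2$ reduces to the delicate inequality comparing $(4A-B)^2$ with $\Delta$, together with the $\pm 4$ on the right-hand side of the Pellian equation, where the floor/ceiling and the sign of $4A-B$ must be tracked carefully. By contrast the parity convention causes no trouble: since $\Delta$ is preserved, the sign in $t^2 - \Delta u^2 = \pm 4$, and hence the required parity of $l$, agree for $f$ and $R_Z\cdot f$, consistent with $l$ being a rotation invariant.
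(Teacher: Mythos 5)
Your high-level strategy is genuinely different from the paper's: you want to realize $R_Z$ as the shift map on the purely periodic negative continued fraction of the root $w_f$ and transport that shift to $\sigma(f)$, whereas the paper never touches the irrational $w_f$ at all --- it proves $\beta(R_Z f)=T_Z(\beta(f))$ (Lemma \ref{L:beading}) by finite computations with the rational number $z/(z-Au)$, reducing to discriminants $k^2\pm 4$ via Lemma \ref{L:reducetouf} and then checking case by case that ``pinch, knead, pinch'' equals $T_Z$. Your first step (the root of $R_Z\cdot f$ is $1/(n-w_f)$) is correct, your final step (reading $n=2$ or $n=q_2+2$ off the front of $\sigma(f)$, which does reduce to a Lemma~\ref{L:firstcoefficient}-type inequality) is correct, and the intermediate claim that $\sigma(R_Z\cdot f)$ is $\sigma(f)$ rotated left by $n-1$ is in fact true.

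The genuine gap is the middle step, where that rotation amount is supposed to come from. The asserted dictionary --- that a negative-CF partial quotient $b$ ``accounts for a block of $b-1$ symbols of $\sigma(f)$'' --- is false, and no correct substitute is supplied. Take $f=(1,5,2)$: the negative continued fraction of $w_f=\tfrac{5+\sqrt{17}}{2}$ has period $[5,3,2,2,3]$, so your blocks would have sizes $4,2,1,1,2$, totalling $10$, while $\sigma(f)=10011$ has length $5$. The string is not tiled by such blocks, so removing the leading quotient does not visibly delete $n-1$ symbols from the front; the actual correspondence between $\sigma(f)$ and the negative CF of $w_f$ is essentially the content of Theorem \ref{T:Denjoy}, and the naive aligned version of it already fails after one reduction step (the period $[3,2,2,3,5]$ of the next root would give $11100$, not $\sigma((2,5,1))=11001$). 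Relatedly, $\sigma(f)$ is defined from a finite regular continued fraction of the rational $z/(z-Au)$ with a parity convention, not from the infinite expansion of $w_f$; you gesture at conjugating the fundamental automorph to bridge these but never carry it out. To close the gap you would either have to prove the rotate-by-$(n-1)$ statement directly from the continuant description of $\beta$ --- which is the paper's $T_Z$ computation in disguise --- or first establish the precise $\sigma$-versus-negative-CF dictionary, at which point you are proving Theorem \ref{T:Denjoy} first and must avoid circular use of Theorem \ref{T:rotation}.
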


\begin{example}
Recall from Example 2 in Section \ref{S:intro} that $\sigma((1,5,2)) = 10011$. Iteratively applying $R_Z$ to $(1,5,2)$ gives the following cycle of forms with corresponding binary strings (obtained by applying $\sigma$):
\begin{equation*}
\begin{tikzcd}[arrows=mapsto]
		(1,5,2) \rar & (2,5,1) \rar & (4,7,2) \rar & (4,9,4) \rar & (2,7,4) \rar & (1,5,1),\\[-0.4cm]
	10011 \rar & 11001 \rar  & 00111 \rar & 01110 \rar  & 11100 \rar & 10011.
\end{tikzcd}
\end{equation*}
\end{example}

\noindent We delay proving Theorem \ref{T:rotation} until the end of this section and turn first to understanding its implications. Foremost, it shows that $\sigma$ induces a map from classes of indefinite forms to binary necklaces.   
\begin{definition}
The \textbf{length} and \textbf{weight} of a binary necklace are the number of beads and the number of $1$s in the necklace.  If $f$ is a Z-reduced form or $\mathfrak{c}$ is a class of indefinite forms, the \textbf{length} and \textbf{weight}  of $f$ or $\mathfrak{c}$ are the length and weight of the corresponding necklace.  
\end{definition}

From the definition of $\sigma$, we have:

\begin{proposition}\label{P:weightparity}
The parity of the weight of a class of indefinite forms depends only on the discriminant of the class.  A class $\mathfrak{c}$ of discriminant $\Delta$ has odd weight if $t^2 - \Delta u^2 = -4$ is solvable and even weight if it is not solvable.
\end{proposition}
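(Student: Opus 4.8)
The plan is to track the weight directly through the definition of $\sigma$. First I would note that the weight of a class $\mathfrak{c}$ is by definition the number of $1$'s in $\sigma(f)$ for any Z-reduced representative $f \in \mathfrak{c}$; this is well-defined because all such $f$ produce the same binary necklace by Theorem \ref{T:rotation}, and cyclic permutation preserves the number of $1$'s. For the parity assertion alone one does not even need the full cycling statement: it suffices to check that every Z-reduced form $f$ of discriminant $\Delta$ yields a string $\sigma(f)$ whose weight has a parity depending only on $\Delta$.

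Next I would read the weight off the stars-and-bars construction. If $\beta(f) = (q_1, \ldots, q_l)$, then $\mathrm{sb}$ lays down $n = q_1 + \cdots + q_l$ stars and inserts exactly $l - 1$ bars into the gaps, appending a $1$ at each bar and a $0$ at each empty gap. Hence the number of $1$'s in $\sigma(f)$ equals the number of bars, namely $l - 1$. Thus the weight of $f$ is $l - 1$, one less than the number of partial quotients of $\beta(f)$.

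Finally I would invoke the parity convention built into Definition \ref{D:beadseq}. The fundamental solution $(t,u)$ of $|t^2 - \Delta u^2| = 4$ depends only on $\Delta$, and it satisfies $t^2 - \Delta u^2 = -4$ precisely when that equation is solvable (the standard fact that, when the $-4$ equation is solvable, its minimal-$u$ solution is also the fundamental solution of the $\pm 4$ equation). By the definition of $\beta$, the length $l$ is even when $t^2 - \Delta u^2 = -4$ and odd otherwise. Therefore the weight $l - 1$ is odd exactly when $-4$ is solvable and even otherwise, and in either case its parity is a function of $\Delta$ alone, which is the claim.

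The argument is essentially bookkeeping, so I do not anticipate a genuine obstacle. The only point requiring a moment's care is the standard fact that the sign of $t^2 - \Delta u^2$ at the fundamental solution coincides with solvability of the $-4$ equation, which is what guarantees that the parity convention in the definition of $\beta$ is itself controlled by $\Delta$ rather than by the particular form.
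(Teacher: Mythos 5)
Your argument is correct and is exactly the definition-chasing the paper has in mind (the paper simply asserts the proposition ``from the definition of $\sigma$''): the weight of $\sigma(f)$ is the number of bars $l-1$, and the parity of $l$ is fixed by the convention in Definition \ref{D:beadseq}, which depends only on whether the fundamental solution of $|t^2-\Delta u^2|=4$ satisfies the $-4$ equation, i.e.\ only on $\Delta$. Your care about identifying ``the fundamental solution satisfies $t^2-\Delta u^2=-4$'' with ``the $-4$ equation is solvable'' is exactly the one point worth flagging, and you handle it correctly.
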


When the string $\sigma(f)$ begins with a $1$, Theorem \ref{T:rotation} shows that reducing $f$ will rotate the string in such a way that it ``skips a 1''.  That is, if we iteratively cycle $\sigma(f)$ with the operation described in the theorem, the next $1$ to appear at the beginning of a string will be the third $1$ appearing in $\sigma(f)$. If ${\sigma(f)}$ has odd weight,  the strings $\sigma(R_Z^n \cdot f)$ for $n= 1$, $2$, \ldots (the exponent denoting iteration) will eventually pass through every cyclic permutation of $\sigma(f)$.  But if it has even weight, we miss some permutations.  For such discriminants, the induced map from primitive form classes to primitive necklaces is two-to-one.  To obtain a bijection, we must impose more structure on the necklaces.

\begin{definition}
An \textbf{alternating string} (resp. \textbf{alternating necklace}) is a binary string (resp. binary necklace) of even weight with $1$'s that come in two colors (we will use \ca{green} and \cb{blue}). The $1$'s alternate colors as you move along the string or necklace.  Two alternating strings represent the same alternating necklace only if they can be cycled to have $1$s and $0$'s coincide with matching colors.  The binary necklace obtained by ignoring the colors in an alternating necklace is the \textbf{underlying necklace}. An alternating necklace is \textbf{primitive} if the underlying necklace is primitive. 
\end{definition}

 We will use an overbar to denote forming equivalence classes, both of forms and of strings.   Thus, $\overline{(A,B,C)}$ and $\overline{f}$ will both denote classes of forms, while $\overline{11101}$ and $\overline{\mathfrak{s}}$ are necklaces represented by the strings  $11101$ and $\mathfrak{s}$. 
  
\begin{definition}
We denote by $\overline{\sigma}$ the map from classes of forms to necklaces defined as follows: $\overline{\sigma}(\mathfrak{c})$ is the necklace $\overline{\sigma(f)}$, where $f$ is an arbitrary Z-reduced form in $\mathfrak{c}$.   The restriction of $\overline{\sigma}$ to classes of odd weight maps to the set of binary necklaces of odd weight.  It maps classes of even weight to the set of alternating necklaces (making the arbitrary convention that when traversing the string $\overline{\sigma}(\overline{f})$ from the left, the first `1' encountered is colored $\ca{1}$).
\end{definition}

The following result links the notions of primitivity for both form classes and necklaces.
\begin{theorem}\label{T:primitivity}
If $f$ is a primitive Z-reduced form, then $\sigma(f)$ is a primitive string, and $\sigma$ restricts to a bijection
\begin{equation*}
	\sigma \colon \prescript{}{{p}}Z \leftrightarrow \prescript{}{{p}}B_1.
\end{equation*}
The induced map $\overline{\sigma}$ restricts to a bijection between the set of primitive classes of odd weight and the set of primitive binary necklaces of odd weight.  It also restricts to a bijection between the set of primitive classes of even weight and the set of primitive alternating necklaces of nonzero weight.
\end{theorem}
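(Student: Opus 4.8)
The plan is to transport the whole problem into the world of natural strings, where the analogue for Dirichlet's map $\gamma$ is already available (Property $1'$), and to use Theorem \ref{T:rotation} to pass between individual forms and their classes. First I would record the combinatorics of stars-and-bars. The map $\mathrm{sb}\colon S_2 \to B_1$ is a bijection: from a string in $B_1$ one recovers $(q_1,\dots,q_l)$ by reading the number of $0$'s before the first $1$, between consecutive $1$'s, and after the last $1$ (adding $1$ to each count). Reading a binary string cyclically instead, the same recipe defines a bijection $\Phi$ from natural necklaces with at least one bead to binary necklaces with at least one $1$; a part of size $a$ contributes the block $0^{a-1}1$, so a length-$d$ rotational symmetry of the cyclic composition matches a length-$d$ rotational symmetry of the binary necklace, whence $\Phi$ preserves primitivity in both directions. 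Finally, for $(a_1,\dots,a_k)\in S_1$ the linear string $\mathrm{sb}(\eta(a_1,\dots,a_k)) = \mathrm{sb}((1,a_1,\dots,a_k))$ represents exactly the necklace $\Phi(\overline{(a_1,\dots,a_k)})$, the prepended $1$ playing the role of the wrap-around bar; in particular it is primitive iff $(a_1,\dots,a_k)$ is.

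Next I would establish injectivity of $\sigma$ on $Z$ together with the implication ``primitive form $\Rightarrow$ primitive string.'' Since $\mathrm{sb}$ is a bijection and $\sigma = \mathrm{sb}\circ\beta$, injectivity of $\sigma$ reduces to injectivity of $\beta$, which is the content of the reconstruction of a form from its bead sequence (Theorem \ref{T:formfrombeads}). For primitivity, let $f\in{}_{p}Z$ have class $\mathfrak c$. The class meets $G^+$ in some $f_0$ (as $\mathfrak c\cap G\neq\emptyset$ and $R_G$ carries it into $G^+$), and $f_0$ is primitive because content is a class invariant. By diagram \eqref{E:diagramsalt} (Theorem \ref{T:xi}) we have $\sigma(\mu(f_0)) = \mathrm{sb}(\eta(\gamma(f_0)))$, with $\mu(f_0)$ in the $R_Z$-cycle of $\mathfrak c$. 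By Property $1'$, $\gamma(f_0)\in{}_{p}S_1$, so the dictionary above makes $\sigma(\mu(f_0))$ a primitive binary string; and Theorem \ref{T:rotation} shows $R_Z$ acts on $\sigma$-values by cyclic rotation, so $\sigma(f)$ is a rotation of $\sigma(\mu(f_0))$ and is again primitive. Thus $\sigma({}_{p}Z)\subseteq{}_{p}B_1$.

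For surjectivity of $\sigma\colon{}_{p}Z\to{}_{p}B_1$, I would start from a primitive $\mathfrak s\in{}_{p}B_1$ of length $m$ and weight $w$. Then $\Phi^{-1}(\overline{\mathfrak s})$ is a primitive natural necklace, and by Property $1'$ each of its $w$ rotations is $\gamma$ of a unique form in ${}_{p}G^+$; applying $\mu$ realizes, as $\sigma$-values, exactly the $w$ rotations of $\mathfrak s$ beginning with a $1$. By Theorem \ref{T:rotation} these propagate inside their $R_Z$-cycles, and the remaining task is to see that the cycles so produced sweep out all $m$ rotations of $\mathfrak s$, so that $\mathfrak s$ itself is hit. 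This reduces to counting the orbits of the ``skip-a-$1$'' map $\rho$ of Theorem \ref{T:rotation} on the $m$ positions and checking that the $\mu$-images meet each orbit. By Proposition \ref{P:weightparity}, $w$ is odd exactly when $t^2-\Delta u^2=-4$ is solvable; when $w$ is odd there is a single $\rho$-orbit, so one cycle already realizes every rotation, while for even $w$ there are two orbits, jointly covered by the two cycles over $\overline{\mathfrak s}$. Injectivity on $Z$ then also yields the converse primitivity statement: a non-primitive form cannot give a string in ${}_{p}B_1$, since such a string already has a primitive preimage. Hence $\sigma\colon{}_{p}Z\to{}_{p}B_1$ is a bijection.

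Finally I would descend to necklaces, using Proposition \ref{P:weightparity} to separate the two regimes. For odd weight a single $R_Z$-cycle realizes all rotations, so $\overline\sigma$ is injective and, with the surjectivity above, gives the bijection between primitive classes of odd weight and primitive binary necklaces of odd weight. For even weight the underlying-necklace map is two-to-one, repaired by the alternating structure, and here lies the main obstacle: one must prove that the two $\rho$-orbits are distinguished precisely by a global alternating $2$-coloring of the $w$ ones, so that each class is recorded by a primitive alternating necklace and $\overline\sigma$ becomes a bijection onto primitive alternating necklaces of nonzero weight. The crux is the parity argument behind the orbit count. Each reduction step skips exactly one $1$, hence preserves the color of the leading $1$ under an alternating coloring; such a coloring closes up consistently around the necklace precisely when $w$ is even, producing the ``two colorings $\leftrightarrow$ two orbits'' correspondence, whereas for odd $w$ the coloring fails to close, reflecting the single orbit. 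Verifying this correspondence, and that the colored datum recovers the class rather than merely the underlying necklace, is the technical heart of the theorem.
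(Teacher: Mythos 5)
Your overall architecture is sound: transporting primitivity through the stars-and-bars dictionary and Property $1'$ of $\gamma$, seeding each $R_Z$-cycle with the $1$-starting rotations via Theorem \ref{T:xi}, analysing the orbits of the skip-a-one rotation of Theorem \ref{T:rotation} to split into the odd/even weight cases, and observing that the color of the leading $1$ is an invariant of a reduction step because each step skips exactly one $1$ --- that last point is indeed what makes the alternating-necklace statement (and the well-definedness of the paper's coloring convention) work.

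The genuine gap is the injectivity step. You assert that injectivity of $\beta$ ``is the content of'' Theorem \ref{T:formfrombeads}, but that theorem only inverts $\beta$ on $\tilde{Z}$, the Z-reduced forms of discriminant $k^2 \pm 4$, and $\beta$ is demonstrably \emph{not} injective on all of $Z$: by Lemma \ref{L:reducetouf} we have $\beta(uf) = \beta(f)$, where $(t,u)$ is the fundamental Pellian solution for the discriminant of $f$, and $uf \neq f$ whenever $u > 1$. (Concretely, $f = (1,7,3)$ has discriminant $37$ with $(t,u) = (12,2)$, so $\sigma((2,14,6)) = \sigma((1,7,3))$ with both forms in $Z$.) What you need, and what is true, is injectivity on ${}_{p}Z$: if $\beta(f_1) = \beta(f_2)$ with both forms primitive, then $u_1 f_1$ and $u_2 f_2$ lie in $\tilde{Z}$ and have equal bead sequences, so $u_1 f_1 = u_2 f_2$ by Theorem \ref{T:formfrombeads}; comparing contents, which equal $u_1$ and $u_2$ since the $f_i$ are primitive, gives $u_1 = u_2$ and hence $f_1 = f_2$. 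This repaired statement suffices everywhere you invoke injectivity --- in separating the two classes over an even-weight necklace and in descending to $\overline{\sigma}$ --- but your ``converse primitivity statement'' (that an imprimitive form cannot map into ${}_{p}B_1$) is both unnecessary for the theorem as stated and not established by your argument, since it genuinely relies on injectivity over all of $Z$, which fails.
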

This will be proved immediately following the proof of Theorem \ref{T:Denjoy}.  

\begin{definition}\label{D:Zcaliber}
If $\mathfrak{c}$ is a class of indefinite forms, then the \textbf{Z-caliber} of the class is $\left| \mathfrak{c} \cap Z \right|$.
\end{definition}

From Theorems \ref{T:rotation} and \ref{T:primitivity}, we immediately obtain:
\begin{corollary}\label{C:Zcaliber}
If $\overline{\mathfrak{s}}$ is a primitive binary necklace of odd weight and length $l$, then $l = k_Z(\mathfrak{c})$, where $\mathfrak{c}$ is the unique class such that $\overline{\sigma}(\mathfrak{c}) = \overline{\mathfrak{s}}$.  If the weight is even and $\mathfrak{c}_1$ and $\mathfrak{c}_2$ are the two classes whose images under $\overline{\sigma}$ have underlying necklace $\overline{\mathfrak{s}}$, then $l = k_Z(\mathfrak{c}_1) + k_Z(\mathfrak{c}_2)$.
\end{corollary}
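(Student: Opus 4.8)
The plan is to count, in two ways, the primitive Z-reduced forms whose image under $\sigma$ is a rotation of a fixed primitive binary string $\mathfrak{s}$ representing $\overline{\mathfrak{s}}$. Since $\overline{\mathfrak{s}}$ is primitive of length $l$, its $l$ cyclic rotations are pairwise distinct elements of ${_pB_1}$. Theorem \ref{T:primitivity} supplies a bijection $\sigma \colon {_pZ} \to {_pB_1}$, so the $\sigma$-preimage of this set of $l$ rotations is a set $F$ of exactly $l$ distinct primitive Z-reduced forms. This yields the value $l$ that will sit on one side of the count.

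Next I would sort $F$ by form class. Because $R_Z$ transports through $\sigma$ to cyclic permutation (Theorem \ref{T:rotation}), the induced map $\overline{\sigma}$ is well defined, and for a Z-reduced form $g$ the string $\sigma(g)$ is a rotation of $\mathfrak{s}$ exactly when the underlying necklace of $\overline{\sigma}(\overline{g})$ equals $\overline{\mathfrak{s}}$. Hence $F$ is precisely the union of the sets $\mathfrak{c} \cap Z$ taken over those classes $\mathfrak{c}$ that $\overline{\sigma}$ sends to a necklace with underlying necklace $\overline{\mathfrak{s}}$. By Zagier's theorem each such class contributes its full cycle of Z-reduced forms, a set of size $k_Z(\mathfrak{c})$, and distinct classes contribute disjoint subsets of $F$. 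Summing sizes gives $l = \sum_{\mathfrak{c}} k_Z(\mathfrak{c})$, the sum running over the relevant classes.

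It then remains only to count those classes. In the odd-weight case Theorem \ref{T:primitivity} makes $\overline{\sigma}$ a bijection onto primitive necklaces of odd weight, so a single class $\mathfrak{c}$ satisfies $\overline{\sigma}(\mathfrak{c}) = \overline{\mathfrak{s}}$ and the count reads $l = k_Z(\mathfrak{c})$. In the even-weight case the primitivity of $\overline{\mathfrak{s}}$ rules out any nontrivial rotational symmetry, so the two alternating colorings of $\overline{\mathfrak{s}}$ are genuinely distinct primitive alternating necklaces; by the even-weight half of Theorem \ref{T:primitivity} these correspond to exactly the two classes $\mathfrak{c}_1$ and $\mathfrak{c}_2$, and $\sigma$ restricts to a bijection $(\mathfrak{c}_1 \cap Z) \sqcup (\mathfrak{c}_2 \cap Z) \to F$, giving $l = k_Z(\mathfrak{c}_1) + k_Z(\mathfrak{c}_2)$.

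I expect the only point requiring care to be the bookkeeping around primitivity rather than any computation: it is primitivity of $\overline{\mathfrak{s}}$ that forces the $l$ rotations to be distinct strings (so that $F$ has size exactly $l$) and that forces the even-weight necklace to lie below exactly two classes. Once these are in place, the result is a plain disjoint-union count resting on the bijectivity of $\sigma$ from Theorem \ref{T:primitivity} and the well-definedness of $\overline{\sigma}$ from Theorem \ref{T:rotation}; no new estimates or constructions are needed.
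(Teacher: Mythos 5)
Your argument is correct and is exactly the expansion the paper intends: the paper states that the corollary follows immediately from Theorems \ref{T:rotation} and \ref{T:primitivity}, and your double count of the $\sigma$-preimages of the $l$ distinct rotations of $\mathfrak{s}$ is the natural way to carry that out. The only wording to tighten is in your middle paragraph, where the union describing $F$ must run over \emph{primitive} classes only (imprimitive classes, e.g.\ scalar multiples of $\mathfrak{c}$, can also have image with underlying necklace $\overline{\mathfrak{s}}$, but their Z-reduced forms lie outside ${_pZ}$ and hence outside $F$); your final paragraph already restricts correctly by invoking the class-level bijections of Theorem \ref{T:primitivity}.
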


\begin{remark}
If define the $G$-caliber of a class $\mathfrak{c}$ to be $\left| \mathfrak{c} \cap G^+ \right|$, then the analogous result connecting $k_G(\mathfrak{c})$ with the length of the natural necklace $\overline{\gamma}(\mathfrak{c})$ is well-known to experts and follows immediately from Properties $1^{\prime}$ and 2 of $\gamma$.   
\end{remark}

The remainder of this section is devoted to the proof of Theorem \ref{T:rotation}. Let $T_Z$ be the endomorphism on the set of sequences of finite integers of length at least $2$ that operates as:
\begin{equation}\label{E:TZ}
	(q_1, q_2, \ldots, q_{l-1}, q_l) \mapsto 
	\begin{cases}
		(q_1 - 1, q_2, \ldots, q_{l-1}, q_l + 1), &\text{if $q_1 \geq 2$,}\\
		(q_2, q_1), &\text{if $q_1 = 1$ and $l=2$,}\\
		(q_3, \ldots, q_{l}, q_2, q_1), &\text{if $q_1 = 1$ and $l > 2$.}
	\end{cases}
\end{equation}

\begin{lemma}\label{L:beading}
If $f$ is a Z-reduced form, then $\len(\beta(f)) \geq 2$ and $\beta(R_Z (f)) = T_Z \circ \beta(f)$.
\end{lemma}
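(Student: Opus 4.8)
The plan is to realize a single Zagier-reduction step as a conjugation of an automorph of $f$ and then to read off what that conjugation does to the continued fraction of Definition \ref{D:beadseq}. Since $R_Z$ sends $f$ to $f(nx+y,-x)$ with $n$ as in \eqref{E:reducingnumber}, one computes
\[
	R_Z(A,B,C) = (An^2 - Bn + C,\ 2An - B,\ A),
\]
and $\Delta$ and the fundamental Pellian solution $(t,u)$ are common to $f$ and $R_Z f$. With $z=\tfrac{t+Bu}{2}$, the matrix $T=\begin{pmatrix} z-Bu & -Cu\\ Au & z\end{pmatrix}$ is an automorph of $f$ with $\det T = \tfrac{t^2-\Delta u^2}{4}=\pm1$ (the sign being $+$ exactly when $t^2-\Delta u^2=4$). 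Writing $M=\begin{pmatrix} n & 1\\ -1 & 0\end{pmatrix}$ for the reducing substitution, the automorph of $R_Z f$ is the conjugate $M^{-1}TM$, whose bottom row works out to $(A'u,z')$ with $A'=An^2-Bn+C$ and $z'=\tfrac{t+(2An-B)u}{2}$, as it should. The fraction $\tfrac{z}{z-Au}$ of Definition \ref{D:beadseq} is obtained from the bottom row $(Au,z)$ of $T$ by a fixed unimodular adjustment: the matrix
\[
	U = \begin{pmatrix} 1 & 0\\ 1 & 1\end{pmatrix}\mathrm{adj}(T) = \begin{pmatrix} z & Cu\\ z-Au & Cu+z-Bu\end{pmatrix}
\]
has first column $(z,z-Au)$ and $\det U=\det T=\pm1$, so $\gcd(z,z-Au)=1$ and $\tfrac{z}{z-Au}$ is already in lowest terms.

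Two bookkeeping points come first. Positivity $z-Au>0$ is equivalent to $t>(2A-B)u$, which on squaring and using $t^2=\Delta u^2\pm4$ becomes $4A(B-A-C)u^2+4s>0$ (with $s=\pm1$), holding by the $Z$-reduced inequality $B>A+C$; hence $\tfrac{z}{z-Au}>1$. That $\len(\beta(f))\geq2$ then follows because a length can drop to $1$ only if $\tfrac{z}{z-Au}$ is an integer, i.e. $z-Au=1$, and one checks this forces $t^2-\Delta u^2=-4$, so the prescribed (even) parity selects the length-$2$ expansion. Finally, because every branch of \eqref{E:TZ} preserves the number of entries and $f,R_Zf$ obey the same parity rule (same $\Delta$, $(t,u)$), the parity constraints on the two sides are automatically compatible; it therefore suffices to match $\beta(R_Z f)$ and $T_Z\,\beta(f)$ as bare sequences.

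For the heart, let $L(q)=\begin{pmatrix} q & 1\\ 1 & 0\end{pmatrix}$ and $\Lambda=L(q_1)\cdots L(q_l)$ be the product attached to $\beta(f)=(q_1,\dots,q_l)$. Since $U$ and $\Lambda$ share the coprime first column $(z,z-Au)$ and lie in $\GL_2(\mathbb{Z})$, there is a unique integer $b$ with $U=\Lambda\begin{pmatrix} 1 & b\\ 0 & -1\end{pmatrix}$. Transporting the conjugation identity through the adjustment yields $U_{R_Z f}=P\,U\,M$ with $P=\begin{pmatrix} 1 & 0\\ 1 & 1\end{pmatrix}M^{-1}\begin{pmatrix} 1 & 0\\ -1 & 1\end{pmatrix}=\begin{pmatrix} 1 & -1\\ 2-n & n-1\end{pmatrix}$, so the lemma reduces to the matrix identity
\[
	P\,\Lambda\,\begin{pmatrix} 1 & b\\ 0 & -1\end{pmatrix} M = \Lambda'\,\begin{pmatrix} 1 & b'\\ 0 & -1\end{pmatrix},
\]
where $\Lambda'$ is the product for $T_Z(q_1,\dots,q_l)$. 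I would verify this along the branches of \eqref{E:TZ}, the bridge between the analytic $n$ and the combinatorial cases being the comparison of $n=\lceil\tfrac{B+\sqrt{\Delta}}{2A}\rceil$ with $q_1=\lfloor\tfrac{z}{z-Au}\rfloor$: one shows $n=2$ exactly when $q_1\geq2$ (branch one, which peels a unit off the front quotient and adds it to the back), while $n\geq3$ corresponds to $q_1=1$ (branches two and three, the rotation carrying $q_2,q_1$ to the tail).

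The main obstacle is this case analysis, and within it two points require care. First, the bridge $n\leftrightarrow q_1$ is genuinely delicate: it compares $t$ with $(4A-B)u$, i.e. $\Delta u^2\pm4$ with $(4A-B)^2u^2$, so the additive $\pm4$ can overturn the naive comparison of $\sqrt{\Delta}$ with $4A-B$, and the boundary cases must be excluded using the integrality of $t,u$ together with the $Z$-reduced inequalities. Second, in branches two and three both $P$ and the factor $M$ carry the reducing number $n$, yet $T_Z$ is independent of $n$; one must check that this $n$-dependence is absorbed by the auxiliary column data $b,b'$, leaving exactly the claimed $n$-free permutation of the quotients.
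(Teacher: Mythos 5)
Your setup is internally consistent: the substitution formula for $R_Z$, the automorph $T$ with bottom row $(Au,z)$, the adjusted matrix $U$ with first column $(z,z-Au)$ and $\det U=\det T=(-1)^{l+1}=-\det\Lambda$ (so that the factor $\left(\begin{smallmatrix}1&b\\0&-1\end{smallmatrix}\right)$ exists), and the conjugation $U_{R_Zf}=P\,U\,M$ are all correct, and the preliminary points ($z-Au>0$, $\len(\beta(f))\geq 2$) can be completed as you indicate, modulo the equality case in the squaring argument. But the proof stops exactly where the work begins: the displayed matrix identity is never verified, and --- more seriously --- the plan as described cannot succeed on the information you propose to use. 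In the branches with $q_1=1$, the first column of the left-hand side is $P\,\Lambda\,(n-b,\,1)^{\mathrm{T}}$, which visibly depends on $n$, while the first column of $\Lambda'$ (the continuant matrix of $T_Z\beta(f)$, which does not mention $n$) does not; the column-adjustment integers $b,b'$ only shear the \emph{second} columns and cannot absorb this. What is actually needed is that $n$ is itself a function of the bead sequence --- one has $b=1$ (equivalent to the continuant formula $Cu\equiv z-[q_1,\ldots,q_{l-1}]$, i.e., essentially the content of Theorem \ref{T:formfrombeads}), $n=2$ when $q_1\geq 2$, and $n=q_2+2$ when $q_1=1$ --- and none of these is established; you yourself flag the comparison of $t/u$ with $4A-B$ as ``genuinely delicate'' and leave it there. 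So the submission is a plausible outline with the central computation and the two auxiliary facts it depends on missing.

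For contrast, the paper takes a different route: it defers $\len(\beta(f))\geq 2$ to the proof of Theorem \ref{T:formfrombeads}, reduces via Lemma \ref{L:reducetouf} to discriminants of the form $k^2\pm 4$, imports Theorem 2 of \cite{bS2014} (Zagier reduction transports through the map $\tilde\beta$, built from $z/A$, to the ``kneading'' operation), shows that $\beta(f)$ is obtained from $\tilde\beta(f)$ by pinching both ends, and then verifies by exhaustive case analysis that pinch--knead--pinch equals $T_Z$. Your automorph/continuant-matrix route could in principle replace the appeal to \cite{bS2014}, but only after supplying the determination of $n$ and $b$ above together with a branch-by-branch verification comparable in length to the paper's.
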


Before we take up the proof of Lemma \ref{L:beading}, let us note that Theorem \ref{T:rotation} is an immediate consequence of it. Recall that $\sigma = \mathrm{sb} \circ \beta$, where $\mathrm{sb}$ is the ``stars-and-bars'' map.  It is readily checked that applying $T_Z$ to $\beta(f) = (q_1, \ldots, q_l)$ transports through $\mathrm{sb}$ to the operation described in Theorem \ref{T:rotation}, the three cases arising when $q_1 \geq 2$, when $q_1 = 1$ and $l=2$, and when $q_1 = 1$ and $l > 2$ respectively.

We also make an observation that will be used frequently in what follows. 

\begin{lemma}\label{L:reducetouf}
Suppose $f$ is Z-reduced of discriminant $\Delta$.  If $(t,u)$ is the fundamental solution of $|t^2 - \Delta u^2| = 4$, then $\beta(f) = \beta(uf)$, $\sigma(f) = \sigma(uf)$, and $\gamma(f) = \gamma(uf)$.
\end{lemma}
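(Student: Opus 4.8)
The plan is to exploit the scaling behaviour of the Pellian equation. Writing $f = (A,B,C)$ with $\Delta = B^2 - 4AC$, the scaled form $uf = (uA, uB, uC)$ has discriminant $(uB)^2 - 4(uA)(uC) = u^2\Delta$, and the Pellian equation attached to $uf$ is $|T^2 - u^2\Delta\, U^2| = 4$, i.e. $|T^2 - \Delta(uU)^2| = 4$. The crux of the argument is to identify the fundamental solution of this scaled equation as $(t,1)$; once that is in hand, I would simply substitute into the three defining recipes and check that each produces the very same rational number and the very same parity requirement as it does for $f$, so the partial-quotient sequences, and hence the three outputs, coincide.

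\textbf{Key steps.} First I would record that $uf \in Z$: since $u \geq 1$ and $A,B,C > 0$ we have $uA, uB, uC > 0$, and $uB > uA + uC$ is equivalent to $B > A + C$, so $uf$ is again Z-reduced and $\beta(uf)$, $\sigma(uf)$ are defined. The central step is the minimality claim. The pair $(t,1)$ is a solution of the $uf$-equation because $t^2 - u^2\Delta\cdot 1^2 = t^2 - \Delta u^2 = \pm 4$. For minimality, any positive solution $(T,U)$ of $|T^2 - \Delta(uU)^2| = 4$ gives a positive solution $(T, uU)$ of the original equation $|T^2 - \Delta V^2| = 4$; since $u$ is the minimal positive second coordinate occurring there, $uU \geq u$, forcing $U \geq 1$. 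Hence $U = 1$ is minimal and $(t', u') = (t,1)$ is the fundamental solution for $uf$.

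\textbf{Substitution and the one subtlety.} Plugging $(t', u') = (t,1)$ into the definitions, I would compute $z' = \tfrac{t' + (uB)u'}{2} = \tfrac{t + uB}{2} = z$. For $\beta$ the fraction to expand is $\tfrac{z'}{z' - (uA)u'} = \tfrac{z}{z - Au}$, and for $\gamma$ it is $\tfrac{z'}{(uA)u'} = \tfrac{z}{Au}$, each identical to the fraction used for $f$; moreover $t'^2 - \Delta' u'^2 = t^2 - \Delta u^2$, so the sign deciding the parity of the expansion is unchanged, and the parity conventions match verbatim. Therefore $\beta(uf) = \beta(f)$, whence $\sigma(uf) = \mathrm{sb}(\beta(uf)) = \sigma(f)$, and $\gamma(uf) = \gamma(f)$. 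For the $\gamma$ claim I would note that the identity is read at the level of the defining recipe: both $f$ and $uf$ have positive leading coefficient, so the recipe applies verbatim, even though a Z-reduced form is never G-reduced. The only genuine content here is the minimality argument identifying $(t,1)$ as the fundamental solution of the scaled equation; after that the proof is pure substitution, so I anticipate no real obstacle.
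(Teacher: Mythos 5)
Your proposal is correct and follows exactly the route the paper takes: the paper's entire proof is the one-line observation that the fundamental solution of $|x^2 - (u^2\Delta)y^2| = 4$ is $(t,1)$, after which the claim is pure substitution into the definitions. You have simply supplied the minimality argument (via $(T,U)\mapsto(T,uU)$) and the substitution details that the paper leaves implicit.
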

\noindent This follows by examining the constructions of $\beta$ and $\gamma$ and observing that the fundamental solution of the Pellian equation $|x^2 - (u^2 \Delta) \cdot y^2| = 4$ is $(t,1)$.

\begin{proof}[Proof of Lemma \ref{L:beading}]

We defer the proof that $\beta(f)$ has length $\geq 2$ to the proof of Theorem \ref{T:formfrombeads}.  We can reduce the other statement to the case where $f$ has discriminant $\Delta$ of the form $k^2 \pm 4$.  Indeed, Lemma \ref{L:reducetouf} shows $\beta(f) = \beta(uf)$ with $(t,u)$ the fundamental solution of $|t^2 - \Delta u^2| = 4$.   Zagier's reduction operation commutes with scalar multiplication, so  if Lemma \ref{L:beading} holds for $uf$, then $T_Z \circ \beta(uf) = \beta(u R_Z(f))$.  Then $T_Z \circ \beta(f) = \beta (R_Z(f))$ and the reduction is complete. The lemma might now be proved directly using properties of continued fractions, but we will instead use a result from an earlier work \cite{bS2014}. 

Suppose the Z-reduced form $f=Ax^2 + Bxy + Cy^2$ has discriminant $\Delta = k^2 +(-1)^s \cdot 4$, with $s = 0$ or $1$.  The Pellian equation $|t^2 - \Delta u^2| = 4$ then has fundamental solution $(t,u) = (k,1)$, so to compute $\beta(f)$ we set $z = \frac{k+B}{2}$. We recall the definition of $\beta(f)$ and also define a modification $\tilde{\beta}(f)$:
\begin{itemize}
	\item  $\beta(f)$ is the sequence of length parity $s$ obtained by expanding $\frac{z}{z-A}$ as a simple continued fraction.
	\item  $\tilde{\beta}(f)$ is the sequence of length parity $s$ obtained by expanding $\frac{z}{A}$ as a simple continued fraction.
\end{itemize}
(We say a sequence has length parity $s$ if its length has the same parity as $s$.)

Let us say we \emph{pinch} the left end of a finite sequence of positive integers by transforming it through the rule
\begin{equation*}
	(q_1, q_2, q_3, \ldots q_l) \mapsto \begin{cases}
		(1, q_1-1, q_2, q_3, \ldots, q_l),  &\text{if $q_1 \geq 2$,}\\
		(q_2+1,q_3,\ldots, q_l) &\text{if $q_1=1$.}
		\end{cases}
\end{equation*}
We pinch the right end similarly. We also make the convention that the sequence `$(1)$' and the empty sequence are pinched by doing nothing.

We \emph{knead} a finite sequence of positive integers by
\begin{enumerate}
	\item removing the leftmost entry, then
	\item pinching both ends of what remains, then
	\item placing the removed entry on the right end of the result.
\end{enumerate}
Theorem 2 of \cite{bS2014} shows that Zagier-reduction transports through the map $\tilde{\beta}$ to kneading the corresponding sequence.

We claim that pinching both ends of $\tilde{\beta}(f)$ produces $\beta(f)$. Comparing the first few steps of the Euclidean algorithm with $z$ and $A$ and then with $z$ and $z - A$ quickly reveals that one sequence of quotients is obtained from the other by pinching the left end.  Pinching just the left end switches the length parity of the sequence of quotients, so to compute ${\beta}(f)$ from $\tilde{\beta}(f)$, we must change the parity again by switching to the other of the two simple continued fraction expansions of $\tfrac{z}{z-A}$.  Since switching between the two continued fraction expansions is accomplished by pinching the right end of the sequence of quotients, the claim follows. 

 To conclude, we must see that acting with $T_Z$ on a sequence $(q_1, \ldots, q_l)$ with $l \geq 2$ has the same effect as performing the three steps
\begin{enumerate}
	\item pinch both ends, 
	\item knead the result, then 
	\item pinch both ends again.
\end{enumerate}
 There are several cases to consider.  First, we assume $q_1 \geq 2$ and $q_l = 1$. If $l \geq 3$, then depending on whether $q_1 = 2$ or $q_1 \geq 3$, the above three steps look as follows:
 \begin{align*}
 	&(2, q_2, \ldots, q_{l-1}, 1) \mapsto (1, 1, q_2, \ldots, q_{l-1} + 1) \mapsto
	(q_2 + 1, \ldots, q_{l-1}, 1, 1) \mapsto (1, q_2, \ldots, q_{l-1}, 2)\\
	&(q_1, \ldots, q_{l-1}, 1) \mapsto (1, q_1 - 1, \ldots, q_{l-1} + 1) \mapsto
	(1, q_1 - 2, \ldots, q_{l-1}, 1, 1) \mapsto (q_1 - 1, \ldots, q_{l-1}, 2)
\end{align*}
 If $l=2$, then depending on whether $q_1 = 2$ or $q_1 \geq 3$, the steps are instead
 \begin{align*}
 	&(2, 1) \mapsto (1, 2) \mapsto (2,1) \mapsto (1,2)\\
	&(q_1, 1) \mapsto (1,q_1) \mapsto (1,q_1 - 2, 1, 1) \mapsto (q_1 - 1, 2)
\end{align*}
If instead $q_1 \geq 2$ and $q_l \geq 2$, we have in the cases $q_1 = 2$ or $q_1 \geq 3$
\begin{align*}
	&(2, q_2, \ldots, q_l) \mapsto (1, 1, q_2, \ldots, q_{l} - 1, 1) \mapsto (q_2 + 1, \ldots, q_l, 1) \mapsto (1, q_2, \ldots, q_l + 1),\\
	&(q_1, \ldots, q_l) \mapsto (1, q_1 - 1, \ldots, q_l - 1, 1) \mapsto (1, q_1 - 2, \ldots, q_l, 1) \mapsto (q_1 - 1, q_2, \ldots, q_{l-1}, q_l + 1)
\end{align*} 
It remains to consider when $q_1 = 1$.  First let us consider when $l \geq 4$.  If $q_3 = 1$ and $q_l = 1$, then we have, depending on whether $l=4$ or $l \geq 5$, either of the following:
\begin{align*}
	(1, q_2, 1, 1) &\mapsto (q_2 + 1, 2) \mapsto (2, q_2 + 1) \mapsto (1,1,q_2,1)\\
	(1, q_2, 1, q_4 \ldots, q_{l-1}, 1) &\mapsto (q_2 + 1, 1, q_4 \ldots, q_{l-1} + 1) \mapsto
	(q_4 + 1 \ldots, q_{l-1}, 1, q_2 + 1)\\
	& \mapsto (1, q_4, \ldots, q_{l-1}, 1, q_2, 1)
\end{align*}
If instead $q_3 \geq 2$ and $q_l = 1$, we have
\begin{align*}
	(1, q_2, q_3, \ldots, q_{l-1}, 1) &\mapsto (q_2 + 1, q_3, \ldots, q_{l-1} + 1) \mapsto
	(1, q_3 - 1, \ldots, q_{l-1}, 1, q_2 + 1)\\
	 &\mapsto (q_3, \ldots, q_{l-1}, 1, q_2, 1)
\end{align*}
If $q_3 = 1$ and $q_l \geq 2$, we have
\begin{align*}
	(1, q_2, 1, q_4, \ldots, q_l) &\mapsto (q_2 + 1, 1, q_4, \ldots, q_{l}-1, 1) \mapsto (q_4 + 1, \ldots, q_l, q_2 + 1)\\
	&\mapsto (1, q_4, \ldots, q_l, q_2, 1)
\end{align*}
Finally, if $q_3 \geq 2$ and $q_l \geq 2$, we have
\begin{align*}
	(1, q_2, q_3, \ldots, q_l) &\mapsto (q_2 + 1, q_3, \ldots, q_l - 1, 1) \mapsto (1, q_3 - 1, \ldots, q_l, q_2 + 1)\\ 
	&\mapsto (q_3, \ldots, q_l, q_2, 1)
\end{align*}

When $l=3$, we have, when $q_3=1$ or $q_3 \geq 2$ respectively
\begin{align*}
	(1, q_2, 1) &\mapsto (q_2+2) \mapsto (q_2+2) \mapsto (1, q_1, 1)\\
	(1, q_2, q_3) &\mapsto (q_2 + 1, q_3 - 1, 1) \mapsto (1, q_3 - 1, q_2 + 1) \mapsto (q_3, q_2, 1)
\end{align*}
And finally, when $l=2$, we have
\begin{align*}
	(1, q_2) &\mapsto (q_2, 1) \mapsto (1,q_2) \mapsto (q_2, 1)
\end{align*}
In every case, the net result is the operation $T_Z$.
\end{proof}

\section{Sections of $\beta$ and $\sigma$}\label{S:continuants}

Let us consider what it takes to invert the string map ${\sigma}$.   The stars-and-bars map $\mathrm{sb}$ is invertible, so from a given binary string $\mathfrak{s}$, we can produce a unique bead sequence. The map $\beta$ from Z-reduced forms to bead sequences is generally many-to-one, but we can find a section using continuants.   We introduce these now, as well as the fundamental identities that will be used heavily through the rest of the paper.

Let $\left[ q_1, \ldots, q_l \right]$ and $\left\langle q_1, \ldots, q_l \right\rangle$ denote, respectively, the numerator and denominator when the regular continued fraction with quotients $\left(q_1, \ldots, q_l \right)$ is simplified to a reduced fraction.  Then
\begin{equation}\label{E:definingcontinuants}
	\frac{\left[ q_1, \ldots, q_l \right]}{\left\langle q_1, \ldots, q_l \right\rangle} = q_1 + \frac{1}{\frac{\left[ q_2, \ldots, q_l \right]}{\left\langle q_2, \ldots, q_l \right\rangle}} = \frac{q_1 \left[ q_2, \ldots, q_l \right] + \left \langle q_2, \ldots, q_l \right\rangle}{\left[ q_2, \ldots, q_l \right]}
\end{equation}
Since $\left[q_2, \ldots, q_l \right]$ and $\left< q_2, \ldots, q_l \right>$ are relatively prime, the fraction on the right is reduced.  Generally, then, $\left< q_1, \ldots, q_l \right> = \left[ q_2, \ldots, q_l \right]$ and $\left[ q_1, \ldots, q_l \right]$ and $\left[ q_2, \ldots, q_l \right]$ are coprime.

\begin{definition}
 A \textbf{continuant} is a number attached to a natural string, denoted by
\begin{equation*}
	\left[ q_1, \ldots, q_l \right]
\end{equation*}
and computed as the numerator when the regular continued fraction with partial quotients $q_1$ \ldots, $q_l$ is simplified to a reduced fraction.  From \eqref{E:definingcontinuants}, the corresponding denominator is then $\left[ q_2, \ldots, q_l \right]$.  We will encounter identities that specialize to involve continuants of the form $[q_s, q_{s-1}]$ and $[q_s, q_{s-2}]$.  We make the convention that the former equals $1$ and the latter equals $0$.
\end{definition}

Equation \eqref{E:definingcontinuants} gives the recursion $\left[ q_1, \ldots, q_l \right] = q_1 \left[ q_2, \ldots, q_l \right] + \left[ q_3, \ldots, q_l \right]$ for $l \geq 1$.  This recursion shows that $q_1$ is the quotient and $\left[ q_3, \ldots, q_l \right]$ is the remainder when dividing $\left[ q_1, \ldots, q_l \right]$ by $\left[ q_2, \ldots, q_l \right]$.  Consequently, if $\alpha = \left[ q_1, \ldots, q_l \right]$ and $\beta = \left[ q_2, \ldots, q_l \right]$, then $\alpha/\beta$ expands in a continued fraction with quotient sequence $(q_1, \ldots, q_l)$.  

From the above recursion, we find by induction the matrix identity
\begin{equation}\label{E:matrixidentity}
	\begin{bmatrix} q_1 & 1\\ 1 & 0 \end{bmatrix} \begin{bmatrix} q_2 & 1\\ 1 & 0 \end{bmatrix} \cdots \begin{bmatrix} q_l & 1\\ 1 & 0 \end{bmatrix} = \begin{bmatrix} \left[ q_1, \ldots, q_l \right] & \left[ q_1, \ldots, q_{l-1} \right]\\ \left[ q_2, \ldots, q_{l} \right] & \left[ q_2, \ldots, q_{l-1} \right] \end{bmatrix}
\end{equation}
Transposing, we find
\begin{equation}\label{E:contsymmetry}
	\left[ q_l, \ldots, q_1 \right] = \left[ q_1, \ldots, q_l \right],
\end{equation}
Taking determinants instead, we have
\begin{equation}\label{E:contdet}
	\left[ q_1, \ldots, q_l \right] \left[ q_2, \ldots, q_{l-1} \right] - \left[ q_1, \ldots, q_{l-1} \right] \left[ q_2, \ldots, q_l \right] = (-1)^l.
\end{equation} 


From \eqref{E:contsymmetry}, we find there are two recursions:
\begin{equation}
\begin{aligned}\label{E:contrecursions}
	\left[ q_1, \ldots, q_l \right] &= q_1 \left[ q_2, \ldots, q_l \right] + \left[ q_3, \ldots, q_l \right],\\
	\left[ q_1, \ldots, q_l \right] &= q_l \left[ q_1, \ldots, q_{l-1} \right] + \left[ q_1, \ldots, q_{l-2} \right],
\end{aligned}
\end{equation}
which have the useful consequences
\begin{equation}\label{E:contends}
\begin{aligned}
	\left[ q_1+q, q_2, \ldots, q_l \right] &= \left[ q_1, \ldots, q_l \right] + q\left[ q_2, \ldots, q_l \right],\\ 
	\left[ q_1, \ldots, q_{l-1}, q_l + q \right] &= \left[ q_1, \ldots, q_l  \right] + q \left[ q_1, \ldots, q_{l-1} \right],
\end{aligned}
\end{equation}
and
\begin{equation}\label{E:contones}
\begin{aligned}
	\left[ 1, q_2, \ldots, q_l \right] &= \left[ q_2+1, \ldots, q_l \right], \\
	\left[ q_1, \ldots, q_{l-1}, 1 \right] &= \left[ q_1, \ldots, q_{l-1} + 1 \right].
\end{aligned}
\end{equation}

We will find it useful to define a continuant with $0$ as an entry.  We set $\left[0 \right] = 0$ and
\begin{equation}\label{E:0cont}
	\left[ 0, q_2, \ldots, q_l \right] = \left[ q_3, \ldots, q_l \right], \qquad \left[ q_1, \ldots, q_{l-1}, 0 \right] = \left[ q_1, \ldots, q_{l-2} \right]
\end{equation}
for $l \geq 2$.  Identities \eqref{E:matrixidentity} through \eqref{E:contones} then remain true with continuants having $0$ as an end entry.

We now return to considering a section of $\beta$.   Given a finite sequence of positive integers $(q_1, \ldots, q_l)$ with $l \geq 2$, we produce a form $(A,B,C)$ by setting
\begin{align*}
	A &= \left[ q_1-1, q_2, \ldots, q_l \right]\\
	C &= \left[ q_1, \ldots, q_{l-1}, q_l - 1 \right]\\
	B &= \left[ q_1, \ldots, q_l \right] + \left[ q_{1} -1, q_2, \ldots, q_{l-1}, q_l - 1 \right],
\end{align*}
(A continuant with end entry equal to $0$ is defined as in \eqref{E:0cont}.) Let us denote the form with these coefficients by
\begin{equation}\label{E:tau}
	\tau((q_1, \ldots, q_l)).
\end{equation}
Recall that $S_2$ is the set of natural strings with length $\geq 2$.  Let $\tilde{Z}$ be the set of Z-reduced forms with discriminant of the form $k^2 \pm 4$.

\begin{theorem}\label{T:formfrombeads}
The map $\tau \colon S_2 \rightarrow \tilde{Z}$ is a bijection, with inverse given by the restriction of $\beta$ to $\tilde{Z}$. In fact, the discriminant of $\tau((q_1, \ldots, q_l))$ is
\begin{equation*}
	\left( \left[ q_1, \ldots, q_l \right] - \left[ q_1 - 1, q_2, \ldots, q_{l-1}, q_l - 1 \right] \right)^2 + (-1)^l \cdot 4
\end{equation*}
\end{theorem}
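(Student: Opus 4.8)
The plan is to prove three things: that $\tau$ sends $S_2$ into $\tilde{Z}$ with the stated discriminant, that $\beta\circ\tau=\mathrm{id}_{S_2}$, and that $\beta|_{\tilde{Z}}$ is injective (equivalently, that $\tau$ is onto). Throughout I abbreviate the continuants $P=[q_1,\dots,q_l]$, $L=[q_2,\dots,q_l]$, $R=[q_1,\dots,q_{l-1}]$, $M=[q_2,\dots,q_{l-1}]$, and $D=[q_1-1,q_2,\dots,q_{l-1},q_l-1]$. Applying \eqref{E:contends} at one or both ends gives $A=P-L$, $C=P-R$, and $D=P-L-R+M$, so that $B=P+D=2P-L-R+M$ and hence $B-(A+C)=M$. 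Since $M\geq 1$ for $l\geq 2$ (with the convention that $[q_2,q_1]=1$ when $l=2$), and since $A$ and $C$ are continuants of nonnegative integers and hence positive, the form $\tau((q_1,\dots,q_l))$ is Z-reduced. For the discriminant, expanding $B^2-4AC=(P+D)^2-4AC$ shows that the claimed value $(P-D)^2+(-1)^l\cdot 4$ is equivalent to the single identity $PD-AC=(-1)^l$; substituting $A=P-L$, $C=P-R$, $D=P-L-R+M$ collapses the left-hand side to $PM-LR$, which is exactly $(-1)^l$ by the determinant identity \eqref{E:contdet}. This simultaneously proves the discriminant formula and shows $\Delta=k^2+(-1)^l\cdot 4$ with $k=P-D>0$, so $\tau$ lands in $\tilde{Z}$.

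To check $\beta\circ\tau=\mathrm{id}$, I note that for $f=\tau((q_1,\dots,q_l))$ the fundamental solution is $(t,u)=(k,1)$ with $k=P-D$, so $z=\tfrac{k+B}{2}=\tfrac{(P-D)+(P+D)}{2}=P$ and $z-A=P-(P-L)=L$. Thus $\tfrac{z}{z-A}=\tfrac{[q_1,\dots,q_l]}{[q_2,\dots,q_l]}$, which by the continuant recursion following \eqref{E:definingcontinuants} is already reduced and has continued fraction expansion exactly $(q_1,\dots,q_l)$, of length $l$. A direct check gives $t^2-\Delta u^2=(-1)^{l+1}\cdot 4$, so the parity demanded in Definition \ref{D:beadseq} is the parity of $l$, and the selected expansion is $(q_1,\dots,q_l)$ itself.

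For the reverse direction I would take $f=(A,B,C)\in\tilde{Z}$ with $\beta(f)=(q_1,\dots,q_l)$ and discriminant $\Delta=k^2+(-1)^l\cdot 4$. From $z=\tfrac{k+B}{2}$ and $\Delta=B^2-4AC$ one derives the key relation $z^2-Bz+AC=(-1)^{l+1}$; reducing it modulo any common prime of $z$ and $A$ forces $\gcd(z,A)=1$, so $z/(z-A)$ is already in lowest terms and therefore $z=P$ and $z-A=L$, whence $A=P-L=[q_1-1,q_2,\dots,q_l]$ by \eqref{E:contends}. To recover $C$ I work modulo $z$: the relation gives $(z-A)(z-C)\equiv AC\equiv(-1)^{l+1}\pmod{z}$, while reducing \eqref{E:contdet} modulo $P$ gives $[q_1,\dots,q_{l-1}]\,[q_2,\dots,q_l]\equiv(-1)^{l+1}\pmod{P}$. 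Cancelling the unit $z-A=[q_2,\dots,q_l]$ yields $z-C\equiv[q_1,\dots,q_{l-1}]\pmod{z}$, and since both sides lie in $(0,z)$ they are equal, so $C=P-R=[q_1,\dots,q_{l-1},q_l-1]$ by \eqref{E:contends}. Finally $B=\tfrac{z^2+AC+(-1)^l}{z}=\tfrac{P^2+PD}{P}=P+D$, using $AC=PD-(-1)^l$ from the determinant identity already established, so $f=\tau((q_1,\dots,q_l))$.

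The main obstacle is exactly this recovery of $C$, since pinning it down from $\beta(f)$ and $\Delta$ requires the modular-inverse argument together with the range bound $0<z-C<z$. That bound reduces to $k>|A-C|$, which I would extract from $k^2-(A-C)^2=B^2-(A+C)^2-(-1)^l\cdot 4>0$, valid because $B>A+C\geq 2$ forces $B^2-(A+C)^2\geq 2(A+C)+1\geq 5$. The same inequalities also settle the claim (deferred from Lemma \ref{L:beading}) that $\len(\beta(f))\geq 2$: a length-one expansion would give $z-A=1$, hence $A=z-1$ and likewise $C=z-1$, so $k=2z-B<2$, contradicting $k\geq 3$ for a valid discriminant $k^2-4>0$. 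Together these steps show $\tau$ and $\beta|_{\tilde{Z}}$ are mutually inverse bijections.
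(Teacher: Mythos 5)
Your proposal is correct and follows the same overall architecture as the paper's proof: show $\tau$ lands in $\tilde{Z}$ via the determinant identity $PD-AC=(-1)^l$, verify $\beta\circ\tau=\mathrm{id}$ by identifying $z=P$ and $z-A=L$, and recover $A$, $C$, $B$ in the converse direction from coprimality of $z$ and $z-A$ together with a congruence modulo $z$ and a range bound, finishing with the $l=1$ contradiction. Two of your local arguments are actually cleaner than the paper's: you derive the key identity \eqref{E:contdetalt} by direct substitution $A=P-L$, $C=P-R$, $D=P-L-R+M$ into \eqref{E:contdet} (the paper substitutes $[1,q_1-1,\ldots,q_l-1,1]$ and invokes \eqref{E:contones}), and your bound $0<C<z$ follows from the single inequality $k^2-(A-C)^2=B^2-(A+C)^2-(-1)^l\cdot 4\geq 1$, whereas the paper's inequality \eqref{E:Zagierinequality} requires a case split on whether equality holds. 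Likewise, recovering $B$ directly from $Bz=z^2+AC+(-1)^l$ is tidier than the paper's detour through the discriminant.

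The one real soft spot is your bare assertion that $k=P-D>0$. For $\tau(\mathfrak{s})$ to lie in $\tilde{Z}$ the discriminant must be a \emph{positive nonsquare}, and for $\beta\circ\tau$ you need $(k,1)$ with $k>0$ to be the fundamental solution; when $l$ is odd this requires $k\geq 3$ (else $k^2-4\leq 0$ or is square), which is not automatic from anything you wrote. The paper spends a few lines on exactly this point. The fix is short in your own notation: $k=L+R-M$ with $L\geq M$ and $R\geq 1$ gives $k\geq 1$, handling even $l$; for odd $l\geq 3$ one has $R=[q_1,\ldots,q_{l-1}]\geq 2$ and $L-M\geq[q_2,\ldots,q_{l-2}]\geq 1$, so $k\geq 3$. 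With that line added, your proof is complete.
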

 
\begin{corollary}\label{C:sigmasurjects}
The map $\sigma \colon Z \rightarrow B_1$ is a surjection.  The map $\tau \circ \mathrm{sb}^{-1}$ is a section of $\sigma$ whose image is the set of forms in $Z$ with discriminants of the form $k^2 \pm 4$.
\end{corollary}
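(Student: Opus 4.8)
The plan is to deduce Corollary \ref{C:sigmasurjects} formally from Theorem \ref{T:formfrombeads} together with the invertibility of the stars-and-bars map. Recall that by construction $\sigma = \mathrm{sb} \circ \beta$, and that $\mathrm{sb} \colon S_2 \to B_1$ is a bijection, so $\mathrm{sb}^{-1} \colon B_1 \to S_2$ is defined on all of $B_1$. Everything rests on the bijectivity statement already established: Theorem \ref{T:formfrombeads} asserts that $\tau \colon S_2 \to \tilde{Z}$ is a bijection whose inverse is the restriction of $\beta$ to $\tilde{Z}$, so in particular $\beta \circ \tau = \mathrm{id}_{S_2}$.

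First I would verify that $\tau \circ \mathrm{sb}^{-1}$ is a section of $\sigma$, interpreting ``section'' as a right inverse. Since $\tau$ maps into $\tilde{Z} \subseteq Z$, the composite $\tau \circ \mathrm{sb}^{-1}$ is a well-defined map $B_1 \to Z$ to which $\sigma$ may be applied, and
\[
\sigma \circ (\tau \circ \mathrm{sb}^{-1}) = \mathrm{sb} \circ (\beta \circ \tau) \circ \mathrm{sb}^{-1} = \mathrm{sb} \circ \mathrm{id}_{S_2} \circ \mathrm{sb}^{-1} = \mathrm{id}_{B_1}.
\]
Thus $\tau \circ \mathrm{sb}^{-1}$ is a right inverse of $\sigma$, which immediately gives surjectivity of $\sigma \colon Z \to B_1$. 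For the image statement, I would note that $\mathrm{sb}^{-1}$ is a bijection onto $S_2$, so the image of $\tau \circ \mathrm{sb}^{-1}$ coincides with the image of $\tau$; by Theorem \ref{T:formfrombeads} this is exactly $\tilde{Z}$, the set of Z-reduced forms with discriminant of the form $k^2 \pm 4$.

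The point to be careful about is bookkeeping rather than a genuine obstacle: one must check that the codomains line up — that $\tau$ really lands in $\tilde{Z} \subseteq Z$ so that $\sigma$ can legitimately be applied to $\tau \circ \mathrm{sb}^{-1}$ — and that $\beta \circ \tau = \mathrm{id}_{S_2}$ holds as stated (which follows because $\tau$ takes values in $\tilde{Z}$, where $\beta$ agrees with $\tau^{-1}$). Beyond these matchings there is no real difficulty; the entire substance of the corollary is contained in Theorem \ref{T:formfrombeads}, and the corollary itself is a one-line formal consequence of the two bijections $\mathrm{sb}$ and $\tau$.
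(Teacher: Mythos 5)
Your proposal is correct and matches the paper's treatment: the corollary is presented there as an immediate formal consequence of Theorem \ref{T:formfrombeads} (whose proof establishes $\beta \circ \tau = \mathrm{id}_{S_2}$ and that $\tau$ is a bijection onto $\tilde{Z}$) together with the bijectivity of $\mathrm{sb}$, exactly as you argue. Your explicit bookkeeping of codomains is a reasonable elaboration of what the paper leaves implicit.
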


\begin{proof}
The following identity follows by using $[1, q_1-1, \ldots, q_l-1, 1]$ in place of $[q_1, \ldots, q_l]$ in \eqref{E:contdet} and then applying \eqref{E:contones}:
\begin{equation}\label{E:contdetalt}
	\left[ q_1, \ldots, q_l \right] \left[ q_1 - 1, q_2, \ldots, q_{l-1}, q_l - 1 \right]  - \left[ q_1, \ldots, q_{l-1}, q_{l} - 1 \right]  \left[ q_1 - 1, q_2, \ldots, q_{l} \right] = (-1)^l.
\end{equation}
Using it, we compute the discriminant of $\tau((q_1, \ldots, q_l))$ to be
\begin{align*}
		([q_1, \ldots, q_l] &+ [q_1-1, q_2, \ldots, q_{l-1}, q_l - 1] )^2 - 4 \left[ q_1 - 1, q_2, \ldots, q_l \right] \left[ q_1, \ldots, q_{l-1}, q_l - 1 \right]\\
		&=([q_1, \ldots, q_l] - [q_1-1, q_2, \ldots, q_{l-1}, q_l - 1] )^2 + 4 \cdot (-1)^l.
\end{align*}		

Now suppose we have a sequence $(q_1, \ldots, q_l)$ with $l \geq 2$,  and set $\tau((q_1, \ldots, q_l)) = (A,B,C)$. The above discriminant is positive.  This is immediate if $l$ is even.  Otherwise, $l \geq 3$, and \eqref{E:contends} gives
\begin{align*}
	[ q_1, \ldots, q_l ] - \left[ q_1 - 1, q_2 \ldots, q_{l-1}, q_l - 1 \right] &> \left[ q_1, \ldots, q_l \right] - \left[ q_1, \ldots, q_{l-1}, q_l - 1 \right]\\
	&= \left[ q_1, \ldots, q_{l-1} \right] \geq 2.
\end{align*}
To check that $(A,B,C)$ is Z-reduced, we note that $A$, $C > 0$.  Using \eqref{E:contends} again,
\begin{align*}
	&B - A - C\\
	 &= [ q_1, \ldots, q_l] + \left[ q_1 - 1, q_2, \ldots, q_{l-1}, q_l - 1 \right] - \left[ q_1 - 1, q_2, \ldots, q_l \right] - \left[ q_1, \ldots, q_{l-1}, q_l - 1 \right]\\
	&= [ q_1, \ldots, q_{l-1} ] - \left[ q_1 - 1, q_2, \ldots, q_{l-1} \right] = \left[ q_2, \ldots, q_{l-1} \right] > 0.
\end{align*}
We may thus apply $\beta$ to $\tau((q_1, \ldots, q_l))$.  

Using the discriminant of $\tau((q_1, \ldots, q_l))$ found above, $\beta \circ \tau ((q_1, \ldots, q_l))$ is computed by expanding 
\begin{equation*}
	\frac{z}{z-A} =  \frac{[q_1, \ldots, q_l]}{[q_1, \ldots, q_l] - \left[ q_1 - 1, q_2, \ldots, q_l \right]} = \frac{[q_1, \ldots, q_l]}{[q_2, \ldots, q_l]}
\end{equation*}
in a continued fraction of length parity $l$.  Thus, $\beta \circ \tau ((q_1, \ldots, q_l)) = (q_1, \ldots, q_l)$.

Suppose now that $(A,B,C)$ is Z-reduced of discriminant $\Delta = k^2 + (-1)^s \cdot 4$ with $k > 0$ (using $k=1$ and $s=0$ when $(A,B,C) = (1,3,1)$). The sequence $\beta((A,B,C))$ is found by setting $z = \frac{B+k}{2}$ and expanding $\frac{z}{z-A}$ as a continued fraction with quotients $(q_1, \ldots, q_l)$ chosen so that $l$ and $s$ have the same parity.  {We suppose, for now, that $l \geq 2$.  We will see at the end of the proof that $l = 1$ cannot happen.} 

We have 
\begin{equation}\label{E:zcong}
	\frac{B^2 - k^2}{4} = AC + \frac{\Delta - k^2}{4} = AC + (-1)^l,
\end{equation}
so $z-A$ and $z$ are coprime.  It follows that $z = [q_1, \ldots, q_l]$ and $z-A = [q_2, \ldots, q_l]$.  Thus, $A = \left[q_1 - 1, q_2, \ldots, q_l \right]$, and in particular, $0 < A < z$. 

We also have
\begin{equation}\label{E:Zagierinequality}
	k^2 - (B-2A)^2 = \Delta - (B-2A)^2 - (-1)^l \cdot 4 = 4A(B-A-C) - (-1)^l \cdot 4 \geq 0.
\end{equation}
If the inequality is strict, then $A > \frac{B-k}{2}$, and so from \eqref{E:zcong} we have $Az > AC + (-1)^l$ and then $z \geq C$.  Were it true that $z = C$, we would have $k = 2C - B$ and
\begin{equation*}
	k^2 - (2C - B)^2 = 4C(B-A-C) - (-1)^l \cdot 4 = 0.
\end{equation*}
Then $C=1$ and $B+k = 2$, but this cannot happen since $B \geq 3$ in a Z-reduced form.  Thus, $C < z$.  Otherwise, when \eqref{E:Zagierinequality} is an equality, then $A = 1$, $B = C+2$, and $l$ is even, so \eqref{E:zcong} shows  $k = C$.  Thus,  $z = C + 1$, and so $0 < C < z$.  

We obtain from \eqref{E:contdetalt} and \eqref{E:zcong} the congruences 
\begin{equation*}
	A \left[ q_1, \ldots, q_{l-1}, q_l - 1 \right] \equiv AC \equiv (-1)^{l+1} \pmod{z}. 
\end{equation*} 
Since $0 < C < z$, we conclude $C = \left[ q_1, \ldots, q_{l-1}, q_l - 1 \right]$. 

Our expressions for $A$ and $C$ now show that $\tau \circ \beta ((A,B,C))$ has the form $(A,B',C)$ with some middle coefficient $B'$.  From \eqref{E:contdetalt} and \eqref{E:zcong}, we also find that $\frac{B-k}{2} = \left[ q_1 - 1, q_2, \ldots, q_{l-1}, q_l - 1 \right]$, and hence
\begin{equation*}
	k = z - \frac{B-k}{2} = \left[ q_1, \ldots, q_l \right] - \left[ q_1 - 1, q_2, \ldots, q_{l-1}, q_l - 1 \right].
\end{equation*}
Our determination of the discriminant of $\tau((q_1, \ldots, q_l))$ then gives
\begin{equation*}
	B'^2 - 4AC = k^2 + 4 \cdot (-1)^l = B^2 - 4AC,
\end{equation*}
so $\tau \circ \beta ((A,B,C)) = (A,B,C)$.

Reworking the above argument assuming $l = 1$, we find $z = q_1$, $A = q_1 - 1$, then $C = q_1 - 1$, and finally $\tfrac{B-k}{2} = q_1 - 2$.  Then $B = 2q_1 - 2$, contradicting $B > A+C$.  It follows that $\beta \colon \tilde{Z} \rightarrow S_2$ is a bijection and $\tau$ is its inverse.
\end{proof}

\section{Comparing Gauss- and Zagier-reduction}\label{S:GZcompare}

This section clarifies the simple relationship between the beading map $\beta$ and Dirichlet's map $\gamma$ and justifies the statement that Zagier-reduction refines Gauss-reduction.  

Our presentation of Gauss-reduction is a modification of the usual algorithm, moving through the cycle of reduced forms in the reverse direction.  This puts the algorithm in a form parallel to Zagier's algorithm, which is necessary for the statement of Theorem \ref{T:reductionrelation}.  We will only apply Gauss's reduction algorithm to G-reduced forms.  Recall that these forms were defined in Definition \ref{D:reduced}.

So, suppose $f = (A,B,C)$ is G-reduced.  Compute $\delta$ such that 
\begin{equation*}
	|\delta| = \left\lfloor \frac{B+\sqrt{\Delta}}{2|A|} \right\rfloor
\end{equation*}
with the sign of $\delta$ chosen so that $\delta A > 0$.  Our version of Gauss's reduction operator sends $f$ to the form $f\left( \delta x + y, -x \right)$.  This switches the sign of $A$.

Recall the definitions of $S_1$, $S_2$, $B_1$, $G$, $G^+$, and $Z$ from Section \ref{S:intro}, and that $G^+$ is the domain of $\gamma$. We also define
\begin{equation*}
	G^- = \{ \, (A,B,C) \mid (A,B,C) \in G, A < 0 \, \}.
\end{equation*}


We also define maps
\begin{itemize}
	\item $\eta^+ \colon S_1 \rightarrow S_2$, which prepends a $1$ to $\mathfrak{s} \in S_1$,
	\item   $\eta^- \colon S_1 \rightarrow S_2$, which appends a $1$ to $\mathfrak{s} \in S_1$,
	\item $\R \colon S_1 \rightarrow S_1$, defined so that $(q_1, \ldots, q_l)^{\R} = (q_l, \ldots, q_1)$,
	\item $T_G \colon S_1 \rightarrow S_1$, defined so that $T_G ((q_1, \ldots, q_l)) = (q_2, \ldots, q_l, q_1)$,
	\item $T_Z \colon S_2 \rightarrow S_2$ defined by \eqref{E:TZ}.
	\item  $\mu \colon G \rightarrow Z$ defined by \eqref{E:mu},
	\item $\rho \colon G \rightarrow G$, defined so that $\rho ((A,B,C)) = (-A,B,-C)$,
	\item $R_G \colon G \rightarrow G$, the Gauss-reduction operator,
	\item $R_Z \colon Z \rightarrow Z$, the Zagier-reduction operator.
\end{itemize}

We also define an operator $\R$ on the set of all binary quadratic forms by $(A,B,C)^{\mathrm{R}} = (C,B,A)$.  The reader should note from context whether $\R$ is being applied to a string or to a form. Please note as well
\begin{itemize}
	\item $\R$, $\rho$, and $R_G$ induce bijections $G^+ \rightarrow G^-$ and $G^- \rightarrow G^+$ (which we denote also with $\R$, $\rho$, and $R_G$),
	\item $f$ and $\mu(f)$ are equivalent forms,
	\item $f$ and $\rho(f)$ are equivalent if $f$ is in a class of odd weight, but otherwise are in different classes,
	\item $f$ and $f^{\R}$ are often not equivalent (in fact, they are in inverse classes in the class group).
\end{itemize}

\begin{proposition}\label{P:reversal}
\leavevmode
\begin{enumerate}
	\item If $f \in G^-$, then $\gamma(f^R) = \left( \gamma \circ \rho(f) \right)^{\mathrm{R}}$.
	\item If $f \in Z$, then $\beta \left(f^{\mathrm{R}} \right) = \beta(f)^{\mathrm{R}}$.
\end{enumerate}
\end{proposition}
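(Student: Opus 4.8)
The plan is to treat both parts through one shared observation about continuants: reversing a quotient string fixes its continuant numerator while replacing its denominator by the ``complementary'' one. Concretely, all four forms in play --- $f$ and $f^{\mathrm{R}}$ in part (ii), and $\rho(f)$ and $f^{\mathrm{R}}$ in part (i) --- share the discriminant $\Delta$, hence the same fundamental solution $(t,u)$ and the same value $z = \tfrac{t+Bu}{2}$. If one of the maps expands $z/D$ with quotient string $(q_1, \ldots, q_l)$, so that $z = [q_1, \ldots, q_l]$ and $D = [q_2, \ldots, q_l]$, then transposing \eqref{E:matrixidentity} (equivalently, using the symmetry \eqref{E:contsymmetry}) shows that the reversed string $(q_l, \ldots, q_1)$ is precisely the expansion of $z/D'$, where $D' = [q_1, \ldots, q_{l-1}]$. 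Reading the determinant identity \eqref{E:contdet} modulo $z$ gives the key relation $D D' \equiv (-1)^{l+1} \pmod z$. Thus, to reverse the output string it suffices to show that the denominator prescribed by the \emph{other} map is congruent to $D'$ modulo $z$ and lies in the correct range.

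Alongside this I would use one arithmetic identity. Squaring $2z - Bu = t$ and substituting $\Delta = B^2 - 4AC$ yields
\begin{equation*}
	z^2 - Bzu + ACu^2 = \varepsilon, \qquad \varepsilon := \tfrac{t^2 - \Delta u^2}{4} \in \{\pm 1\},
\end{equation*}
so that $ACu^2 \equiv \varepsilon \pmod z$. This single congruence drives both computations, and since it reads $ACu^2 \equiv \pm 1$, it simultaneously shows that $Au$, $Cu$, and $u$ are all coprime to $z$; hence each fraction being expanded is automatically in lowest terms.

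For part (ii), $\beta(f)$ expands $z/(z-Au)$ and $\beta(f^{\mathrm{R}})$ expands $z/(z-Cu)$ (with $f^{\mathrm{R}} = (C,B,A)$ again Z-reduced). Writing $\beta(f) = (q_1,\ldots,q_l)$, I must show $z - Cu = D' = [q_1,\ldots,q_{l-1}]$; by the first paragraph this is equivalent to $(z-Au)(z-Cu) \equiv (-1)^{l+1} \pmod z$. But $(z-Au)(z-Cu) \equiv ACu^2 \equiv \varepsilon \pmod z$, so the claim reduces to $\varepsilon = (-1)^{l+1}$, which is exactly the parity convention defining $\beta$ ($l$ even iff $\varepsilon = -1$). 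The range bound $0 < z - Cu < z$ follows from $0 < Cu < z$, which --- after reducing to the case $u = 1$ via Lemma \ref{L:reducetouf} --- is one of the inequalities established in the proof of Theorem \ref{T:formfrombeads}. Since reversal preserves length and parity, the two strings have equal numerator $z$, complementary denominators, and the same length and parity, whence $\beta(f^{\mathrm{R}}) = \beta(f)^{\mathrm{R}}$.

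Part (i) runs in parallel: $\rho(f) = (-A,B,-C)$ and $f^{\mathrm{R}} = (C,B,A)$ both lie in $G^+$, and $\gamma$ expands $z/(-Au)$ and $z/(Cu)$ respectively. Writing $\gamma(\rho(f)) = (p_1,\ldots,p_m)$, I would show $Cu = [p_1,\ldots,p_{m-1}]$, which amounts to $(-Au)(Cu) \equiv (-1)^{m+1} \pmod z$; since $(-Au)(Cu) \equiv -ACu^2 \equiv -\varepsilon$, this reduces to $\varepsilon = (-1)^m$, again exactly the parity convention for $\gamma$ ($m$ odd iff $\varepsilon = -1$). The main thing to verify here is that the congruences upgrade to genuine equalities, i.e.\ the range bounds $0 < -Au < z$ and $0 < Cu < z$. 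These follow from the inequalities $A+B+C > 0$ and $B - A - C > 0$ for these $G^+$ forms together with the estimates $t^2 - (2A+B)^2u^2 = -4A(A+B+C)u^2 + 4\varepsilon$ and $t^2 - (2C - B)^2 u^2 = 4C(B-A-C)u^2 + 4\varepsilon$; each right-hand side is $\geq 0$, with equality only for the single form $f = (-1,1,1)$, which is checked by hand. This boundary bookkeeping --- keeping the signs straight in the parity matching and ruling out the one degenerate equality case --- is the only real obstacle; the remainder is mechanical.
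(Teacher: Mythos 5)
Your proposal is correct and follows essentially the same route as the paper's proof: both rest on the congruence $ACu^2 \equiv \pm 1 \pmod{z}$ coming from $z = \tfrac{t+Bu}{2}$, the continuant reversal and determinant identities \eqref{E:contsymmetry}--\eqref{E:contdet}, and range bounds obtained by comparing $t^2$ with $(2A+B)^2u^2$ (resp.\ $(2C-B)^2u^2$), with the parity convention in the definitions of $\gamma$ and $\beta$ supplying the sign match. The only cosmetic differences are that the paper argues the range bound for part (i) by assuming $-Au > z$ and deriving a contradiction where you instead isolate the single degenerate form $(-1,1,1)$ and check it by hand, and that the paper writes out only part (i), declaring part (ii) ``very similar,'' whereas you make the shared mechanism explicit for both.
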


\begin{proof}
Let $f = (A,B,C) \in G^-$ have discriminant $\Delta$, and let $(t,u)$ be the fundamental solution of $|t^2 - \Delta u^2| = 4$.  If $z = \frac{t+Bu}{2}$, then $\gamma(f^{\R}) = (q_1, \ldots, q_l)$ is the sequence of partial quotients when expanding $z/Cu$ in a continued fraction. Since
\begin{equation}\label{E:AuCucongruence}
	\frac{t-Bu}{2} z = \frac{t^2 - B^2u^2}{4} = (-1)^l - ACu^2,
\end{equation}
we have $Au \cdot Cu \equiv (-1)^{l} \pmod{z}$ and $\gcd(z, Cu) = 1$, so also $z=\left[ q_1, \ldots, q_l \right]$ and $Cu = \left[ q_2, \ldots, q_l \right]$.
From \eqref{E:contdet}, we then have $Cu \left[q_1, \ldots, q_{l-1} \right] = (-1)^{l+1} \pmod{z}$, and it follows that 
\begin{equation}\label{E:Aucongruence}
	-Au\equiv \left[ q_1, \ldots, q_{l-1} \right] \pmod{z}.
\end{equation}

We show now that $-Au$ and $\left[ q_1, \ldots, q_{l-1} \right]$ are both between $1$ and $z$.  This is clear for the latter, and $-Au \geq 1$ since $f \in G^-$.  Let us suppose that $-Au > z$.     Then \eqref{E:AuCucongruence} implies $\frac{t-Bu}{2} z - Cu z \geq 0$, so $t \geq (B+2C)u$.  Squaring both sides and simplifying, we deduce
\begin{equation*}
	t^2 - \Delta u^2 \geq 4 C u^2 (B+A+C).
\end{equation*}
Since $f \in G^-$, we know the right side is positive.  Then $t^2 - \Delta u^2 = 4$ and we have equality, so $u=1$, $C=1$, and $B+A+C = 1$, so $B = -A$ and $t= 2C-A$.    Then $z = C-A$, contradicting the assumption that $-Au > z$.

We now know that $-Au$ and $\left[ q_1, \ldots, q_{l-1} \right]$ are both between $1$ and $z$.  In light of \eqref{E:Aucongruence}, they are equal.  From \eqref{E:contsymmetry}, $-Au = \left[ q_{l-1}, \ldots, 1 \right]$ and $z = \left[ q_l, \ldots q_1 \right]$.  To compute $\gamma \circ \rho (f) $, we expand $z/(-Au)$ in a continued fraction with quotient sequence of length parity $l$.  Thus, $\gamma \circ \rho (f)= (q_l, \ldots, q_1)$, and the proof of (i) is complete.  We omit the details of the very similar argument for (ii).
\end{proof}

The next theorem gives the precise relationship between $\gamma$ and the beading map $\beta$ and is the key to connecting results about Z-reduced forms with results about G-reduced forms.

\begin{theorem}\label{T:xi}
The following diagrams commute
\begin{equation}\label{E:diagrams}
\begin{tikzcd}
    G^+ \arrow[hookrightarrow]{r}{\mu} \arrow{d}{\gamma} & Z \arrow{d}{\beta} & \\ S_1 \arrow[hookrightarrow]{r}{\eta^+} & S_2 \arrow[hookrightarrow]{r}{\mathrm{sb}} & B_1
  \end{tikzcd} \qquad 
  \begin{tikzcd}
    G^- \arrow[hookrightarrow]{r}{\mu} \arrow{d}{\gamma \circ \rho} & Z \arrow{d}{\beta} & \\ S_1 \arrow[hookrightarrow]{r}{\eta^-} & S_2 \arrow[hookrightarrow]{r}{\mathrm{sb}} & B_1
  \end{tikzcd} 
\end{equation}
The maps drawn with horizontal arrows are injections, and the ones drawn with vertical arrows are surjections ($\mathrm{sb}$ is a bijection).  Also,
\begin{align*}
	\mu(G^+) &= \beta^{-1} (\eta^+ (S_1))\\
	\mu(G^-)  &= \beta^{-1} (\eta^- (S_1).  
\end{align*}
A section of $\beta$ is the map $\tau$ of \eqref{E:tau}, while a section of $\gamma$ is the map $\xi((q_1, \ldots, q_l)) = (A,B,C)$ with
\begin{align*}
	A &= \phantom{-} \left[ q_2, \ldots, q_l \right]\\
	B &= \phantom{-} \left[ q_1, q_2, \ldots, q_l \right] - \left[ q_2, \ldots, q_{l-1} \right]\\
	C &= - \left[ q_1, \ldots, q_{l-1} \right]
\end{align*}
The image of $\xi$ is the set of forms in $G^+$ with discriminants of the form $k^2 \pm 4$, and the form $(A,B,C)$ given by the above formulas has discriminant 
\begin{equation*}
	\left( \left[ q_1, \ldots, q_l \right] + \left[ q_2,\ldots, q_{l-1} \right] \right)^2 + (-1)^{l+1} \cdot 4.
\end{equation*}
\end{theorem}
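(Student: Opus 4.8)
The plan is to establish commutativity of the left square of the first diagram by a direct continuant computation, and then to deduce every remaining assertion from that square together with Theorem~\ref{T:formfrombeads}, Proposition~\ref{P:reversal}, and the injectivity of $\mu$ on $G^{\pm}$ recorded in Section~\ref{S:intro}. First I would fix $f=(A,B,C)\in G^{+}$ of discriminant $\Delta$, with fundamental solution $(t,u)$ and $z=\tfrac{t+Bu}{2}$, so that $\gamma(f)=(q_1,\dots,q_l)$ is the expansion of $z/(Au)$ of the parity prescribed for $\gamma$; coprimality forces $z=[q_1,\dots,q_l]$ and $Au=[q_2,\dots,q_l]$. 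Since $\mu$ preserves the class, $\mu(f)=(A,\,2A+B,\,A+B+C)$ shares the discriminant and fundamental solution, and a one-line computation gives $z'=z+Au$ and $z'-Au=z$, so $\beta(\mu(f))$ is the expansion of $(z+Au)/z$. The prepend recursion in \eqref{E:contrecursions} yields $[1,q_1,\dots,q_l]=z+Au$ with denominator $[q_1,\dots,q_l]=z$, so $(z+Au)/z$ expands as $(1,q_1,\dots,q_l)=\eta^{+}(\gamma(f))$; the parities match because $\eta^{+}$ flips length parity while the (discriminant-determined) parity rules of $\gamma$ and $\beta$ are opposite. The right-hand square is the definition $\sigma=\mathrm{sb}\circ\beta$ with $\mathrm{sb}$ the stated bijection.

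For the $G^{-}$ diagram I would reduce to the case just proved. A direct check gives $\mu(f)=(\mu(f^{\mathrm R}))^{\mathrm R}$ when $f\in G^{-}$ (note $f^{\mathrm R}\in G^{+}$), and then Proposition~\ref{P:reversal}(ii), the $G^{+}$ square applied to $f^{\mathrm R}$, the identity $(\eta^{+}s)^{\mathrm R}=\eta^{-}(s^{\mathrm R})$, and Proposition~\ref{P:reversal}(i) give
\[
\beta(\mu(f))=\beta\bigl(\mu(f^{\mathrm R})\bigr)^{\mathrm R}=\bigl(\eta^{+}\gamma(f^{\mathrm R})\bigr)^{\mathrm R}=\eta^{-}\bigl(\gamma(f^{\mathrm R})^{\mathrm R}\bigr)=\eta^{-}\bigl(\gamma(\rho(f))\bigr),
\]
which is exactly the second square. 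Injectivity of the horizontal maps is then just the injectivity of $\mu$ on each of $G^{\pm}$, the injectivity of the prepend/append maps $\eta^{\pm}$, and the bijectivity of $\mathrm{sb}$.

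Next I would verify the explicit section $\xi$ by a self-contained computation paralleling Theorem~\ref{T:formfrombeads}. Completing the square in $B^{2}-4AC$ for $\xi(s)=(A,B,C)$ and applying the determinant identity \eqref{E:contdet} gives $\disc\xi(s)=\bigl([q_1,\dots,q_l]+[q_2,\dots,q_{l-1}]\bigr)^{2}+(-1)^{l+1}4$. Writing $k$ for the bracketed quantity, the fundamental solution is $(k,1)$ with $t^{2}-\Delta u^{2}=(-1)^{l}4$; then $z=\tfrac{k+B}{2}=[q_1,\dots,q_l]$ and $Au=[q_2,\dots,q_l]$, so $z/(Au)$ expands as $(q_1,\dots,q_l)$, of length $l$, which is precisely the parity $\gamma$ demands. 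Hence $\gamma(\xi(s))=s$. That $\xi(s)\in G^{+}$—i.e.\ $A>0$, $AC<0$, and $B>|A+C|$—follows from positivity and monotonicity of continuants as in the reducedness check of Theorem~\ref{T:formfrombeads}, with the degenerate cases $l\le 2$ absorbed by the conventions $[q_s,q_{s-1}]=1$, $[q_s,q_{s-2}]=0$ together with \eqref{E:0cont} and \eqref{E:contones}. Possessing sections, $\gamma$, $\beta$, and $\gamma\circ\rho$ (the last since $\rho$ is a bijection) are all surjective.

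Finally I would combine these pieces. The $G^{+}$ square and $\gamma\circ\xi=\mathrm{id}$ give $\beta(\mu(\xi(s)))=\eta^{+}(s)$; as $\mu(\xi(s))$ has discriminant $k^{2}\pm4$ and $\beta$ restricts to a bijection on such forms with inverse $\tau$ (Theorem~\ref{T:formfrombeads}), this yields the clean identity $\mu\circ\xi=\tau\circ\eta^{+}$. Injectivity of $\mu$ on $G^{+}$ then shows $f=\xi(\gamma(f))$ for every $f\in G^{+}$ of discriminant $k^{2}\pm4$, identifying the image of $\xi$ with exactly that set. For $\mu(G^{+})=\beta^{-1}(\eta^{+}(S_1))$, the inclusion $\subseteq$ is the first square; for $\supseteq$, given $g\in Z$ with $\beta(g)$ beginning in $1$, Lemma~\ref{L:reducetouf} sends $g$ to $ug\in\tilde Z$ with the same bead sequence, the $\tilde Z$ case writes $ug=\mu(f_0)$ with $f_0\in G^{+}$, and comparing coefficients of $\mu(f_0)=ug$ shows $u\mid f_0$, so $g=\mu(f_0/u)\in\mu(G^{+})$. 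The $G^{-}$ identity drops out by repeating this through the reversal $\mathrm R$. I expect the main obstacle to be bookkeeping consistency—tracking the opposite parity conventions of $\gamma$ and $\beta$, the continuant index conventions when $l$ is small, and the orientation introduced by $f\mapsto f^{\mathrm R}$—so that prepending versus appending the symbol $1$ ($\eta^{+}$ versus $\eta^{-}$) lines up correctly in every case.
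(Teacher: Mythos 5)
Your proposal is correct, and while its overall architecture matches the paper's (reduce to discriminant $k^2\pm 4$ via Lemma \ref{L:reducetouf}, handle the $G^-$ diagram by reversal through Proposition \ref{P:reversal} and $\mu(f^{\R})=\mu(f)^{\R}$, and lean on Theorem \ref{T:formfrombeads} for the sections), you diverge from the paper in two places worth noting. First, for the commutativity of the $G^+$ square you observe directly that $z'=z+Au$, $z'-Au=z$, and $[1,q_1,\ldots,q_l]/[q_1,\ldots,q_l]=(z+Au)/z$ with the length parity automatically correct; the paper instead routes through the ``pinch both ends'' description of passing from $z/Au$ to $z/(z-Au)$ and a final parity switch. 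Your computation is shorter and avoids the pinching bookkeeping. Second, and more substantively, for the inclusion $\beta^{-1}(\eta^+(S_1))\subseteq\mu(G^+)$ the paper argues directly that if $\beta(f)$ begins with $1$ then $f(x-y,y)$ is G-reduced, which requires a delicate Pell-equation inequality ($4A-2B+C<0$, established by squaring $4A-B<t/u$ and using that $f$ cannot represent $0$); you instead first prove $\xi(s)\in G^+$ and the identity $\mu\circ\xi=\tau\circ\eta^+$, deduce the inclusion on $\tilde Z$ from the bijectivity of $\beta|_{\tilde Z}$, and descend to general $g\in Z$ by the divisibility argument $\mu(f_0)=ug\Rightarrow u\mid f_0$. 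This reverses the paper's logical order (the paper derives $\xi(s)\in G^+$ as a consequence of the set identity, you derive the set identity from $\xi(s)\in G^+$), and it trades the paper's inequality casework for a structural argument; the one spot you should flesh out is the direct verification that $B>|A+C|$ for $\xi(s)$, i.e.\ that both $B-(A+C)$ and $B+(A+C)$ are positive, which does follow from \eqref{E:contends} but deserves the two lines of computation rather than an appeal to ``monotonicity of continuants.''
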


\begin{proof}
 If $f$ has discriminant $\Delta$, then $\eta^+ \circ \gamma(f) = (1, q_1, \ldots, q_l)$, where $q_1, \ldots, q_l$ is the sequence of quotients in the regular continued fraction of $z'/Au$, where $t^2 - \Delta u^2 = (-1)^{l} 4$ is the fundamental solution, and $z' = \frac{t+Bu}{2}$.   To compute $\beta \circ \mu (f)$, since $\mu (f)$ also has discriminant $\Delta$, we set $z = \frac{t + (B+2A)u}{2}$, then expand $\frac{z}{z-Au}$ in a continued fraction with quotient sequence of length parity $l+1$. As observed in the proof of Lemma \ref{L:beading}, the quotient sequence of the expansion of $\frac{z}{z-Au}$ is obtained by pinching both ends of the quotient sequence of the expansion of $\frac{z}{Au}$.  Comparing $z'$ and $z$, we see that $\frac{z}{Au}$ expands as a continued fraction with quotient sequence $(q_1 + 1, q_2, \ldots, q_l)$.  Thus, $\beta \circ \mu(f)$ is found by pinching both ends of $(q_1 + 1, q_2, \ldots, q_l)$, or it would be except that the length parity is wrong.  Changing the length parity is accomplished by switching between the two continued fraction expansions of $\frac{z}{z-Au}$. Thus, $\beta \circ \mu(f) = (1, q_1, \ldots, q_l) = \eta \circ \gamma(f)$ and the diagram commutes.

For the second diagram, if $f \in G^-$, then $f^{\mathrm{R}} \in G^+$ and \eqref{E:mu} shows $\mu(f^{\mathrm{R}}) = \mu(f)^{\mathrm{R}}$.  Thus, from Proposition \ref{P:reversal}
\begin{equation*}
	(\beta \circ \mu(f))^{\mathrm{R}}  = \beta\left( \mu(f)^{\mathrm{R}} \right) =   \beta \circ \mu \left(f^{\mathrm{R}} \right)   =   \eta^+ \circ \gamma(f^{\mathrm{R}}) = \left( \eta^-  \circ \gamma \circ \rho(f) \right)^{\R},
\end{equation*}
so $\beta \circ \mu(f) = \eta^{-} \circ \gamma \circ \rho (f)$.

It is easily verified that $\eta^+$, and $\eta^-$ are injections and that restricting $\mu$ to $G^+$ or $G^-$ gives an injection. Theorem \ref{T:formfrombeads} states that $\beta$ is a surjection with section $\tau$.

Let us now verify that that  $\mu(G^+) = \beta^{-1} (\eta^+ (S_1))$. The inclusion $\mu(G^+) \subset \beta^{-1} (\eta^+ (S_1))$ follows from the commutativity of the first diagram. Suppose $\mathfrak{s} = q_1, \ldots, q_l$ is a nonempty natural string and $f = (A,B,C)$ is a Z-reduced form such that
\begin{equation}\label{E:prepended1}
	\beta(f) = \eta^+ (\mathfrak{s}) = 1, q_1, \ldots, q_l.
\end{equation}
Then $f$ is in $\mu(G^+)$ if and only if $f(x-y,y) = (A,B-2A,A-B+C)$ is G-reduced.  Since $f$ is Z-reduced, we have $A (A-B+C) < 0$.  We must check also that $B - 2A > |2A-B+C|$.  Since $C > 0$, it suffices to show $4A-2B+C < 0$.  

Since $f$ is Z-reduced, we have $C - B < 0$, so $4A - 2B + C < 0$ is immediate if $4A - B < 0$.  We may thus assume $4A - B \geq 0$.  If $f$ has discriminant $\Delta$ and $(t,u)$ is the fundamental solution of $|t^2 - \Delta u^2| = 4$, then $\beta(f)$ is computed by setting $z = \frac{t+Bu}{2}$ and expanding $\tfrac{z}{z-Au}$ in a continued fraction.  From \eqref{E:prepended1}, the first quotient is $1$, so either $\beta(f) = (1,1)$ or $Au < \frac{z}{2}$.  If $\beta(f) = (1,1)$, then $f = (1,3,1)$ and $4A - 2B + C = -1 < 0$.  Otherwise, 
\begin{equation*}
	 4A - B < \frac{t}{u},
\end{equation*}
and we have assumed the left side is nonnegative.  Squaring both sides and using the definition of $(t,u)$, we find 
\begin{equation*}
	4A - 2B + C < \pm \frac{1}{Au^2} \leq 1.
\end{equation*}
But $4A-2B+C = f(2,-1)$, and $f$ cannot take the value $0$ since its discriminant is nonsquare.  The desired inequaltiy $4A-2B+C < 0$ follows, and we have shown $\beta^{-1} (\eta^+ (S_1)) \subset \mu(G^+)$.

If instead $\beta (f) = \eta^- \mathfrak{s} = q_1 \ldots q_l 1$, then Proposition \ref{P:reversal} shows $\beta \left( f^{\R} \right) = \eta^+ \left( \mathfrak{s}^{\R} \right)$.  Since $\beta^{-1} (\eta^+ (S_1)) = \mu(G^+)$, we have $f^{\R} = \mu(g)$ for some $g \in G^+$, so $f = (\mu (g))^{\R} = \mu(g^R)$, hence $f \in \mu(G^{-})$.  Thus, $\beta^{-1} (\eta^- (S_1)) = \mu(G^-)$.

We now check that $\xi$ is a section of $\gamma$.  If $\mathfrak{s} = q_1\ldots q_l$ is in $S_1$, then $\tau \circ \eta^+(\mathfrak{s}) = (A',B',C')$ with 
\begin{align*}
	A' &= \left[ q_2, \ldots, q_l \right],\\
	B' &= \left[ q_1 + 1, q_2, \ldots, q_l \right] + \left[ q_2, \ldots, q_{l-1}, q_l - 1 \right], \\
	C' &= \left[ q_1 + 1, q_2, \ldots, q_{l-1}, q_l - 1 \right].  
\end{align*}
This is in $\beta^{-1} (\eta^+ (S_1))$, so equals $\mu(g)$ for $g = (A', B'-2A', A'-B'+C') \in G^+$.
A few applications of \eqref{E:contends} shows that $g$ is the form $\xi(\mathfrak{s})$ in the theorem. Diagram chasing then proves $\gamma \circ \xi (\mathfrak{s}) = \mathfrak{s}$.   This also shows that $\gamma$ is a surjection.

 Theorem \ref{T:formfrombeads} shows that $\tau \circ \eta^+(\mathfrak{s})$ has discriminant 
 \begin{equation*}
 	(\left[ q_1 + 1, q_2, \ldots, q_l \right] - \left[ q_2, \ldots, q_l - 1 \right])^2 + (-1)^{l+1} \cdot 4.
\end{equation*}
But $g = \xi(\mathfrak{s})$ has the same discriminant, and the formula for the discriminant of $\xi(\mathfrak{s})$ in the theorem appears with a couple of applications of \eqref{E:contends}.  This shows that $\xi(S_1)$ is contained in the set of forms in $G^+$ with discriminant of the form $k^2 \pm 4$.  Conversely, if $f$ is such a form, we have just shown  $\xi \circ \gamma(f)$ is the unique form in $g \in G^+$ with $\mu(g) = \tau \circ \eta^+ \circ \gamma(f)$. But
 \begin{equation*}
 	\tau \circ \eta^+ \circ \gamma(f) =  \tau \circ \beta  \circ \mu(f) = \mu(f),
\end{equation*}
so $f = \xi \circ \gamma(f)$.
\end{proof}

\begin{lemma}\label{L:firstcoefficient}
If $f = (A,B,C) \in G^+$ and $\gamma(f) = (q_1, \ldots, q_l)$, then 
\begin{equation*}
	R_G(f) = f(q_1 x + y,-x).
\end{equation*}
\end{lemma}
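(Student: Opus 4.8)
The plan is to unwind the two definitions and reduce the lemma to the single claim that the first partial quotient $q_1$ of $\gamma(f)$ equals the reducing number $\delta$ of Gauss's operator. Because $f \in G^+$ has $A>0$, the sign condition $\delta A > 0$ forces $\delta > 0$, so by definition $R_G(f) = f(\delta x + y, -x)$ with $\delta = \lfloor \omega \rfloor$, where $\omega := \tfrac{B+\sqrt{\Delta}}{2A}$. On the other side, $\gamma(f) = (q_1, \ldots, q_l)$ is by definition a continued fraction expansion of $\omega' := \tfrac{z}{Au} = \tfrac{t+Bu}{2Au}$, where $(t,u)$ is the fundamental solution of $|t^2 - \Delta u^2| = 4$ and $z = \tfrac{t+Bu}{2}$. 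Since $R_G(f) = f(\delta x+y,-x)$ is definitional, it suffices to prove $q_1 = \delta$. (For primitive $f$ this is immediate from Property 1, which identifies $\gamma(f)$ with the minimal period of the purely periodic continued fraction of $\omega$ and hence gives $q_1 = \lfloor\omega\rfloor$; the estimate below instead treats all of $G^+$ uniformly, imprimitive forms included.)

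The key point is that $\omega'$ is a very close rational approximation of $\omega$. Writing $t - u\sqrt{\Delta} = \tfrac{t^2 - \Delta u^2}{t + u\sqrt{\Delta}} = \tfrac{\pm 4}{t + u\sqrt{\Delta}}$, I get
\[
  \left| \omega' - \omega \right| \;=\; \frac{\left| t - u\sqrt{\Delta} \right|}{2Au} \;=\; \frac{2}{Au\,(t + u\sqrt{\Delta})} \;<\; \frac{1}{Au},
\]
the last inequality because $t + u\sqrt{\Delta} > 2$ (indeed $t \ge 1$, $u \ge 1$, and $\Delta \ge 5$). I also record that $m := z - Au\,\delta = Au(\omega' - \delta)$ is an integer.

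Since $\Delta$ is nonsquare, $\omega$ is irrational and $\delta < \omega < \delta + 1$; one checks from the $G$-reduced inequalities (in particular $B > A+C$) that in fact $\omega > 1$, so $\delta \ge 1$. If $t^2 - \Delta u^2 = 4$ then $\omega' > \omega$, and the estimate gives $\delta < \omega' < \delta + 1 + \tfrac1{Au}$, hence $1 \le m \le Au$; if $t^2 - \Delta u^2 = -4$ then $\omega' < \omega$ and $\delta - \tfrac1{Au} < \omega' < \delta+1$, hence $0 \le m \le Au - 1$. In both cases $0 \le m \le Au$. When $\omega'$ is not an integer we have $m \neq 0$ and $m \neq Au$ (these would force $\omega' = \delta$ or $\omega' = \delta+1$), so $0 < m < Au$ and therefore $\lfloor \omega' \rfloor = \delta$; as the two continued fraction expansions of a noninteger rational agree except in their final entry, $q_1 = \lfloor\omega'\rfloor = \delta$.

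The one remaining case, and the only delicate point, is when $\omega'$ is an integer $n$, for then the two expansions of $n$ are $(n)$ of length $1$ and $(n-1,1)$ of length $2$, whose first entries differ. Here the estimate together with the sign of $t^2 - \Delta u^2$ pins down $n$: in the $+4$ case $\omega' > \omega$ forces $n = \delta+1$, and in the $-4$ case $\omega' < \omega$ forces $n = \delta$. The parity convention in the definition of $\gamma$ selects the even-length expansion when $t^2 - \Delta u^2 = 4$ and the odd-length one when $t^2 - \Delta u^2 = -4$; thus $\gamma(f) = (\delta, 1)$ in the former case and $\gamma(f) = (\delta)$ in the latter, so $q_1 = \delta$ either way. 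Combining all cases, $q_1 = \delta$, and hence $R_G(f) = f(\delta x + y, -x) = f(q_1 x + y, -x)$, as claimed. The main obstacle is exactly this boundary case, where it is the prescribed parity built into $\gamma$ that forces the expansion to begin with $\delta$.
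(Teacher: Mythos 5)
Your proof is correct, but it takes a genuinely different route from the paper's. The paper first reduces to the case $u=1$ (discriminant of the form $k^2\pm 4$) via Lemma \ref{L:reducetouf} and the fact that $R_G$ commutes with scalar multiplication, then invokes Theorem \ref{T:xi} to write $f=\xi((q_1,\ldots,q_l))$ with all coefficients expressed as continuants, and finally establishes $q_1<\tfrac{B+\sqrt{\Delta}}{2A}<q_1+1$ by proving the double inequality $(2q_1A-B)^2<\Delta<(2(q_1+1)A-B)^2$ with continuant identities. You instead work directly from the definitions for arbitrary $u$: the approximation estimate $|\omega'-\omega|=\tfrac{2}{Au(t+u\sqrt{\Delta})}<\tfrac{1}{Au}$ confines $m=z-Au\delta$ to $0\le m\le Au$, the generic case gives $\lfloor\omega'\rfloor=\delta$ outright, and the boundary case where $\omega'$ is an integer is settled by the parity convention built into Dirichlet's map. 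Your version is more elementary and self-contained (no reduction step, no continuants), and it isolates exactly where the parity convention matters --- the degenerate case $\omega'\in\mathbb{Z}$, which the paper's continuant computation absorbs without comment; the paper's version, by contrast, avoids the case split entirely and reuses machinery (Theorem \ref{T:xi}) that is needed elsewhere anyway. The only small gap is the assertion that $\omega>1$, which deserves its one-line justification: $B>|A+C|$ gives $A-B+C<0$, so $1$ lies strictly between the roots of $Ax^2-Bx+C$ and hence $\overline{\omega}<1<\omega$; this is indeed the routine check you flagged.
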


\begin{proof}
Suppose $f$ has discriminant $\Delta$ and let $(t,u)$ be the fundamental solution of $|t^2 - \Delta u^2| = 4$.  Since $R_G$ commutes with scalar multiplication, Lemma \ref{L:reducetouf} shows that it suffices to prove the lemma when $u=1$ and $f$ has discriminant $t^2 \pm 4$.  

From Theorem \ref{T:xi}, we have $f =  \xi ((q_1, \ldots, q_l))$,  $t = \left[ q_1, \ldots, q_l \right] + \left[ q_2, \ldots, q_{l-1} \right]$, and
\begin{equation}\label{E:somevalues}
	A = \left[ q_2, \ldots, q_l \right], \qquad 
	\frac{t+B}{2} = \left[ q_1, \ldots, q_l \right], \qquad \frac{t-B}{2} = \left[ q_2, \ldots, q_{l-1} \right].
\end{equation}

We verify that
\begin{equation}\label{E:doubleinequality}
	(2q_1 A - B)^2 < t^2 + (-1)^{l+1} \cdot 4 < (2(q_1 + 1)A - B)^2.
\end{equation}
When $l=1$, this reduces to $q_1^2 < q_1^2 + 4 < q_1^2 + 4q_1 + 4$, so let us assume that $l \geq 2$.  We readily check that $q_1 A - B = \left[ q_2, \ldots, q_{l-1} \right] - \left[ q_3, \ldots, q_l \right]$, and then
\begin{equation*}
	q_1 (q_1 A - B) < [q_1, ..., q_{l-1}] < (q_1+1) (A(q_1+1) - B).
\end{equation*}
Multiplying through by $4A = 4\left[q_2, \ldots, q_l \right]$ and using \eqref{E:contdet}, we obtain
\begin{equation*}
	4q_1^2 A^2 - 4q_1 AB  < t^2 - B^2 + (-1)^{l+1} \cdot 4 < 4(q_1+1)^2 A^2 - 4(q_1+1)AB
\end{equation*}
and \eqref{E:doubleinequality} follows.  The discriminant of $f$ is $\Delta = t^2 + (-1)^{l+1} \cdot 4$, so since $2q_1 A - B > 0$, we have
\begin{equation*}
	q_1 < \frac{B + \sqrt{\Delta}}{2A} < q_1 + 1,
\end{equation*}
and the lemma follows.
\end{proof}

\begin{theorem}\label{T:reductionrelation}
Suppose $f \in G^+$ with $\gamma(f) = (q_1, \ldots, q_l)$ and $g \in Z$.  Then
\begin{alignat*}{3}
	\gamma \left( \rho \circ R_G (f) \right) &=  T_G (\gamma(f)), \qquad &
	\gamma ( R_G^2 (f)) &= T_G^2 (\gamma(f)), \qquad &
	\beta (R_Z (g)) &= T_Z (\beta(g)),\\
	\mu (R_G (f)) &= R_Z (\mu(f)), \qquad &
	\mu (R_G^2 (f)) &= R_Z^{q_2} (\mu(f)). &
\end{alignat*}   
(An exponent on an operator indicates iteration.)
\end{theorem}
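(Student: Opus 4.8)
The plan is to dispatch the five identities in a dependency order that lets the harder ones lean on the easier ones, using the commutative diagrams of Theorem~\ref{T:xi} and Lemma~\ref{L:beading} as the main engines. The third identity, $\beta(R_Z(g)) = T_Z(\beta(g))$, is exactly Lemma~\ref{L:beading} and needs no further work. I would treat the fourth identity, $\mu(R_G(f)) = R_Z(\mu(f))$, next, by direct computation. By Lemma~\ref{L:firstcoefficient}, $R_G(f) = f(q_1 x + y, -x)$, and its proof shows $q_1 = \lfloor (B+\sqrt{\Delta})/2A \rfloor$; since $\mu(f) = (A, 2A+B, A+B+C)$ has the same discriminant, its Zagier reducing number from \eqref{E:reducingnumber} is $n = \lceil 1 + (B+\sqrt\Delta)/2A\rceil = q_1 + 2$. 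I would then expand both $\mu(R_G(f))$ (using the $C>0$ branch of \eqref{E:mu}, since $R_G(f) \in G^-$) and $R_Z(\mu(f)) = \mu(f)(nx+y,-x)$ and check that the two triples of coefficients coincide. No injectivity or specialization of the discriminant is needed here.

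With the fourth identity in hand, the first identity follows formally. Applying $\beta$ to $\mu(R_G(f)) = R_Z(\mu(f))$ and using that $R_G(f) \in G^-$, the second diagram of Theorem~\ref{T:xi} ($\beta \circ \mu = \eta^- \circ \gamma \circ \rho$ on $G^-$) turns the left side into $\eta^-(\gamma(\rho R_G(f)))$, while Lemma~\ref{L:beading} and $\beta\circ\mu = \eta^+\circ\gamma$ turn the right side into $T_Z((1, q_1, \ldots, q_l))$. A one-line evaluation of \eqref{E:TZ} gives $T_Z((1, q_1, \ldots, q_l)) = (q_2, \ldots, q_l, q_1, 1) = \eta^-(T_G(\gamma(f)))$ (valid also when $l=1$, where it reads $(q_1,1)$), so cancelling the injection $\eta^-$ yields $\gamma(\rho R_G(f)) = T_G(\gamma(f))$. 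For the second identity I would first record that $\rho$ and $R_G$ commute: both $\rho R_G(f)$ and $R_G \rho(f)$ compute to $(-(Aq_1^2 - Bq_1 + C),\, 2Aq_1 - B,\, -A)$, the point being that $f$ and $\rho(f) = (-A, B, -C)$ share $|A|$, $B$, and $\Delta$, hence the same reducing number. Since $\rho^2 = \mathrm{id}$, $(\rho R_G)^2 = R_G^2$, so applying the first identity to $f$ and then to $\rho R_G(f) \in G^+$ gives $\gamma(R_G^2(f)) = T_G^2(\gamma(f))$.

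The fifth identity, $\mu(R_G^2(f)) = R_Z^{q_2}(\mu(f))$, is the substantive one, and I would route it through $\beta$. Iterating Lemma~\ref{L:beading} gives $\beta(R_Z^{q_2}(\mu(f))) = T_Z^{q_2}((1, q_1, \ldots, q_l))$, while the second identity together with $\beta\circ\mu = \eta^+\circ\gamma$ gives $\beta(\mu(R_G^2(f))) = \eta^+(T_G^2(\gamma(f))) = (1, q_3, \ldots, q_l, q_1, q_2)$. The heart of the matter is the combinatorial claim
\begin{equation*}
	T_Z^{q_2}\big((1, q_1, q_2, \ldots, q_l)\big) = (1, q_3, \ldots, q_l, q_1, q_2),
\end{equation*}
which I would verify by tracking the orbit: the first application of \eqref{E:TZ} (the ``$q_1=1$'' branch) produces $(q_2, q_3, \ldots, q_l, q_1, 1)$, after which the ``$q_1 \geq 2$'' branch decrements the leading entry and increments the trailing one at each step, so that $q_2 - 1$ further applications transport $(q_2, q_3, \ldots, q_l, q_1, 1)$ to $(1, q_3, \ldots, q_l, q_1, q_2)$. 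This establishes $\beta(\mu(R_G^2(f))) = \beta(R_Z^{q_2}(\mu(f)))$. To upgrade from bead sequences to forms, I would reduce to a discriminant of the form $k^2 \pm 4$: by Lemma~\ref{L:reducetouf} and the fact that $R_G$, $R_Z$, and $\mu$ all commute with scalar multiplication, it suffices to treat $uf$, whose discriminant $u^2\Delta$ has the form $k^2 \pm 4$, and for which both $\mu(R_G^2(uf))$ and $R_Z^{q_2}(\mu(uf))$ lie in $\tilde{Z}$. Since $\beta$ restricts to a bijection on $\tilde{Z}$ by Theorem~\ref{T:formfrombeads}, equality of bead sequences forces equality of forms.

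The main obstacle is the bookkeeping of reducing numbers that produces the exponent $q_2$. Two things must be pinned down precisely: that a single Gauss step $G^+ \to G^-$ corresponds to exactly one Zagier step (the $n = q_1 + 2$ computation in the fourth identity), and that the returning step $G^- \to G^+$ absorbs exactly $q_2 - 1$ additional Zagier steps, which is visible only through the decrement/increment dynamics of \eqref{E:TZ} in the combinatorial claim above. The edge cases --- $l = 1$ or $l = 2$, and entries equal to $1$ --- are where sign and length-parity slips are most likely, so I would check the orbit computation against a small example such as the reduction cycle for $(1,5,2)$ before trusting the general count.
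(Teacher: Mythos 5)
Your proposal is correct, but it inverts the paper's dependency structure for two of the five identities. The paper's computational core is a direct continuant calculation of $\gamma(\rho \circ R_G(f))$: it reduces to discriminant $t^2 \pm 4$, writes $f = \xi((q_1,\ldots,q_l))$ via Theorem \ref{T:xi}, and evaluates $z$ and the leading coefficient of $\rho \circ R_G(f)$ as continuants to get $T_G(\gamma(f))$ directly; the relation $\mu(R_G(f)) = R_Z(\mu(f))$ is then \emph{deduced} by pushing both sides through $\beta$ and invoking injectivity of $\beta$ on $\tilde{Z}$. You do the opposite: you prove $\mu(R_G(f)) = R_Z(\mu(f))$ first by the elementary reducing-number computation $n = \lceil 1 + (B+\sqrt{\Delta})/2A\rceil = q_1+2$ and a coefficient match (which checks out, including that $R_G(f) \in G^-$ forces the $C>0$ branch of \eqref{E:mu}), and then \emph{deduce} $\gamma(\rho \circ R_G(f)) = T_G(\gamma(f))$ by applying $\beta$ to that identity and cancelling the injection $\eta^-$. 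What your route buys is that the first and fourth identities need neither continuant identities nor any specialization to discriminants of the form $k^2 \pm 4$; what the paper's route buys is that the continuant machinery of Theorem \ref{T:xi} is already set up and reused wholesale. Your treatments of the second identity (commutativity of $\rho$ and $R_G$), the third (Lemma \ref{L:beading} verbatim), and the fifth (the $T_Z^{q_2}$ orbit computation followed by the reduction to $\tilde{Z}$ and injectivity of $\beta$ there) coincide with the paper's, and your explicit tracking of the orbit $(1,q_1,\ldots,q_l) \mapsto (q_2,\ldots,q_l,q_1,1) \mapsto \cdots \mapsto (1,q_3,\ldots,q_l,q_1,q_2)$ is in fact slightly more detailed than the paper's one-line assertion. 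Like the paper, you leave the degenerate cases $l=1,2$ of the fifth identity implicit, though you at least flag them as the place to be careful.
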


\begin{proof}
Let $f$ have discriminant $\Delta$ and let $(t,u)$ be the fundamental solution of $|t^2 - \Delta u^2| = 4$.  Noting that $R_G$ and $\rho$ commute with scalar multiplication, we find using Lemma \ref{L:reducetouf} that $\gamma \left( \rho \circ R_G (uf) \right) = \gamma \left(u \left( \rho \circ R_G (f) \right) \right) = \gamma \left( \rho \circ R_G (f) \right)$ and $T_G (\gamma(uf)) = T_G (\gamma(f))$.   Thus, in proving the first relation, we may assume without loss of generality that $f$ has discriminant $t^2 \pm 4$ and $u = 1$.  

Suppose that $f = (A,B,C)$ and $\gamma(f) = (q_1, \ldots, q_l)$.  Theorem \ref{T:xi}  shows that $f =  \xi ((q_1, \ldots, q_l))$,  $t = \left[ q_1, \ldots, q_l \right] + \left[ q_2, \ldots, q_{l-1} \right]$, and \eqref{E:somevalues} again holds.

From Lemma \ref{L:firstcoefficient}, we have $\rho \circ R_G (f) = (-q_1^2 A + q_1 B - C, 2q_1 A - B, -A)$.  This form has discriminant $t^2 \pm 4$, so we compute $\gamma (\rho \circ R_G(f))$ by setting 
\begin{equation*}
	z = \frac{t-B + 2q_1A}{2} = \left[ q_2, \ldots, q_{l-1} \right] + q_1 \left[ q_2, \ldots, q_l \right] = \left[ q_2, \ldots, q_{l}, q_1 \right]
\end{equation*}
and expanding $z/(-q_1^2 A + q_1 B - C)$ in a continued fraction.  This denominator equals
\begin{align*}
	&-q_1^2 \left[ q_2, \ldots, q_l \right] + q_1 \left[ q_1, \ldots, q_l \right] - q_1 \left[ q_2, \ldots, q_{l-1} \right] + \left[ q_1, \ldots, q_{l-1} \right] \\
	&= q_1 \left[ q_3, \ldots, q_l \right] + \left[ q_3, \ldots, q_{l-1} \right] = \left[ q_3, \ldots, q_l, q_1 \right].
\end{align*}
Thus, 
\begin{equation*}
	\gamma \left( \rho \circ R_G (f) \right) = (q_2, \ldots, q_l, q_1) = T_G (\gamma(f)).
\end{equation*}

The relation $\gamma \left( R_G^2 (f) \right) = T_G^2 (\gamma(f))$ follows immediately from the first, noting that $\rho$ and $R_G$ commute.  The relation $\beta (R_Z(f)) = T_Z (\beta (f))$ is the content of Lemma \ref{L:beading}.  

To check the final two relations, we can again reduce to when $u = 1$ and $f$ has discriminant $t^2 \pm 4$. Theorem \ref{T:xi} gives
\begin{equation*}
	\beta \circ \mu \left( R_G (f) \right) = \eta^- \circ \gamma \circ \rho \left( R_G (f) \right), \qquad \beta \circ \mu \left( R_G^2 (f) \right) = \eta^+ \circ \gamma \left( R_G^2 (f) \right).
\end{equation*}
The first two relations then give
\begin{align*}
	\beta \circ \mu \left( R_G (f) \right)  &= \eta^- \circ T_G (\gamma(f)) = (q_2, \ldots, q_l, q_1, 1),\\
	\beta \circ \mu \left( R_G^2 (f) \right) &= \eta^+ \circ T_G^2 (\gamma(f)) = (1, q_3, \ldots, q_l, q_1, q_2).
\end{align*}

On the other hand, the third relation and Lemma \ref{L:beading} give
\begin{align*}
	\beta \left( R_Z (\mu(f)) \right) &= T_Z (\beta \circ \mu(f)) = T_Z (\eta^+ \circ \gamma(f)) = (q_2, \ldots, q_l, q_1, 1), \\
	\beta (R_Z^{q_2} (\mu(f))) &= T_Z^{q_2} (\beta \circ \mu(f)) = T_Z^{q_2} (\eta^+ \circ \gamma(f)) = T_Z^{q_2} ((1, q_1, \ldots, q_l)) = (1, q_3 \ldots, q_l, q_1, q_2).
\end{align*}
The final two relations then follow from Theorem \ref{T:formfrombeads}.
\end{proof}

\begin{corollary}\label{C:munotinjective} 
If $f$ and $g$ are G-reduced forms such that $\mu(f) = \mu(g)$, then one of $f$ and $g$ is in $G^+$ and the other is in $G^-$.  If $g \in G^-$, then 
\begin{equation*}
	f = R_G (g) = g(-x+y,-x).
\end{equation*}
Conversely, if $g \in G^-$ and $f = g(-x+y,-x)$, then $\mu(f) = \mu(g)$.
\end{corollary}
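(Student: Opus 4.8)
The plan is to treat the first assertion as a formal consequence of injectivity, to pin down the partner form by solving $\mu(f)=\mu(g)$ directly, and then to match that partner with $R_G$; this last matching is the only genuinely analytic step. For the first sentence, recall from the introduction that $\mu$ restricted to forms with $A>0$ is injective, as is its restriction to forms with $C>0$. Since every $G$-reduced form has $AC<0$, we have a disjoint decomposition $G=G^+\sqcup G^-$. Hence if $f\neq g$ are $G$-reduced with $\mu(f)=\mu(g)$, they can lie neither both in $G^+$ nor both in $G^-$, so one lies in each; after relabelling we may take $g\in G^-$ and $f\in G^+$.

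For the remaining assertions, write $g=(A,B,C)\in G^-$, so $A<0<C$ and $\mu$ uses its second branch, giving $\mu(g)=(A+B+C,\,B+2C,\,C)$. If $f=(A_1,B_1,C_1)\in G^+$ satisfies $\mu(f)=\mu(g)$, then $A_1>0$ forces the first branch, $\mu(f)=(A_1,\,2A_1+B_1,\,A_1+B_1+C_1)$. Equating components and solving the resulting triangular system (first for $A_1$, then $B_1$, then $C_1$) yields $(A_1,B_1,C_1)=(A+B+C,\,-2A-B,\,A)$, which one recognizes as $g(-x+y,-x)$. The same computation settles the converse simultaneously: since $B>|A+C|$ forces $A+B+C>0$, the form $g(-x+y,-x)=(A+B+C,-2A-B,A)$ has positive leading coefficient, so applying the first branch of $\mu$ to it recovers $(A+B+C,\,B+2C,\,C)=\mu(g)$.

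It remains to identify $g(-x+y,-x)$ with $R_G(g)$, i.e.\ to show the reducing number of $g$ equals $1$. By definition $R_G(g)=g(\delta x+y,-x)$ with $\delta A>0$ and $|\delta|=\lfloor (B+\sqrt{\Delta})/(2|A|)\rfloor=\lfloor(B+\sqrt{\Delta})/(-2A)\rfloor$; as $A<0$ the substitution $(-x+y,-x)$ is the case $\delta=-1$, so I must prove this floor equals $1$, that is $-2A\le B+\sqrt{\Delta}<-4A$. The lower bound is free for every $g\in G^-$: a direct expansion gives $\Delta-(2A+B)^2=-4A(A+B+C)>0$, whence $\sqrt{\Delta}>-2A-B$. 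For the upper bound I will invoke that $f$ is $G$-reduced, which yields $B_1=-2A-B>0$ and $B_1>A_1+C_1=2A+B+C$; these give $4A+B<2A<0$ and $g(2,1)=4A+2B+C<0$. Consequently $\Delta-(4A+B)^2=-4A\,(4A+2B+C)<0$, so $\sqrt{\Delta}<-4A-B$, and the floor equals $1$. Therefore $R_G(g)=g(-x+y,-x)=f$.

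The main obstacle is precisely this final step. Unlike the transport identities of Theorem \ref{T:reductionrelation}, where $\mu\circ R_G=R_Z\circ\mu$ on $G^+$ (so $\mu$ does \emph{not} collapse an $R_G$-orbit to a point), the coincidence $f=R_G(g)$ cannot come from a transport-of-structure argument and must instead be forced out of the reduction inequalities. The delicate part is the upper reducing-number bound: this is exactly where the hypothesis that $f$ is genuinely $G$-reduced—equivalently, that the fiber $\mu^{-1}(\mu(g))$ actually has size two—must be converted into the arithmetic inequality $g(2,1)<0$.
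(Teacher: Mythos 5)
Your argument is correct, but it is not the route the paper takes --- it is essentially the ``direct'' proof that the paper only sketches in Remark 1 following the corollary. The paper's actual proof stays inside its transport-of-structure framework: it first reduces (via Lemma \ref{L:reducetouf} and commutation with scalars) to discriminants of the form $t^2\pm 4$, applies $\beta$ to the equation $\mu(f)=\mu(g)$ and uses the two commutative diagrams of Theorem \ref{T:xi} to get $\eta^+\circ\gamma(f)=\eta^-\circ\gamma\circ\rho(g)$ at the string level, concludes $\gamma(f)=T_G(\gamma\circ\rho(g))=\gamma(R_G(g))$, and then invokes the injectivity of $\gamma$ on forms of such discriminants to get $f=R_G(g)$; the explicit matrix $g(-x+y,-x)$ then comes from Lemma \ref{L:firstcoefficient} applied to $\rho(g)$. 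You instead solve the triangular linear system $\mu(f)=\mu(g)$ to produce the unique candidate $(A+B+C,-2A-B,A)=g(-x+y,-x)$, and then prove directly that the reducing number of $g$ is $1$ via the two inequalities $-2A<B+\sqrt{\Delta}<-4A$, the upper one resting on $\Delta-(4A+B)^2=-4A\,g(2,1)<0$, which is exactly where the $G$-reducedness of $f$ (i.e.\ $-2A-B>0$ and $-2A-B>2A+B+C$) enters. Your computations check out, including the converse (where $A+B+C>0$ legitimately selects the first branch of $\mu$). What your version buys is self-containedness and transparency: it needs none of the string machinery and isolates precisely which hypothesis forces $\lfloor(B+\sqrt{\Delta})/(2|A|)\rfloor=1$ --- indeed it makes clear that for a general $g\in G^-$ the partner $g(-x+y,-x)$ need not be $G$-reduced at all. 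What the paper's version buys is brevity given the apparatus already built, and consistency with the article's theme of reading off facts about forms from the associated strings.
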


\begin{proof}
To verify the first statement, we examine \eqref{E:mu} and observe that $\mu$ is injective when restricted to either $G^+$ or $G^-$.  For the rest, we again reduce to the case when $f$ and $g$ have discriminants of the form $t^2 \pm 4$.  So suppose $g \in G^-$, $f \in G^+$ and $\mu(f) = \mu(g)$.  We apply $\beta$ and use Theorem \ref{T:xi} to obtain
\begin{equation*}
	\eta^+ \circ \gamma(f) = \eta^- \circ \gamma \circ \rho (g).
\end{equation*}
There exists then a natural string $\mathfrak{s}$ such that $\gamma \circ \rho (g) = 1 \mathfrak{s}$ and $\gamma(f) = \mathfrak{s} 1$, so 
\begin{equation*}
	 \gamma(f) = T_G \circ \gamma \circ \rho (g) = \gamma \circ \rho \circ R_G \circ \rho (g) = \gamma (R_G (g)).
\end{equation*}
Theorem \ref{T:xi} shows that $\gamma$ is a bijection when restricted to forms with discriminants of the form $t^2 \pm 4$, so $f = R_G (g)$.  Let $g' = \rho (g) \in G^+$.  Since $\gamma  (g') = 1 \mathfrak{s}$, Lemma \ref{L:firstcoefficient} shows that $\rho \circ R_G (g) = R_G (g') = g'(x+y,-x)$, and $R_G (g) = g(-x + y, -x)$ follows.  

The converse is checked with a quick computation.
\end{proof}

\begin{remarks}
1.  The corollary can be proved directly.  If $f=(A,B,C)$, $A> 0$ and $g=(A',B',C')$, $C' > 0$, then $\mu(f) = \mu(g)$ leads to the relations $A = A' + B' + C'$, $B+2A = B'+2C'$, and $A+B+C=C'$.  These readily imply $C=A'=-\frac{B+B'}{2}$, and from this, $f = g(-x+y,-x)$.  It can then be shown that $f = R_G(g)$.\\

2. The corollary is visually evident from the Conway topograph of $\mathfrak{c}$ once we learn how to see G- and Z-reduced forms:  G-reduced forms correspond to ``riverbends'' \cite{mW2017} and Z-reduced forms correspond to ``positive confluences'', i.e., inlets to the river from the side with positive numbers.  Every riverbend is adjacent to exactly one positive confluence, and $\mu$ maps the corresponding G-reduced form to the corresponding Z-reduced form.  Pairs of forms that have the same image under $\mu$ correspond to adjacent riverbends, and being adjacent forces the conditions of the corollary.
\end{remarks}

\section{Constructing minimal periods with $\sigma$}\label{S:Denjoy}

In this final section, we prove that $\sigma$ produces minimal periods of Denjoy continued fractions.

\begin{definition}
A \textbf{Denjoy continued fraction} is an expression
\begin{equation*}
	q_1 + \cfrac{1}{q_2 + \cfrac{1}{\ddots}}
\end{equation*}
in which all quotients are $0$ or $1$ and such that the quotient sequence does not include two consecutive zeroes.   If $\xi > 0$ is an irrational number, then $\xi$ expands uniquely in a Denjoy continued fraction as follows.  Set $\xi_0 = \xi$.  For $n \geq 1$, we define $\xi_n$ and $q_n$ recursively by setting 
\begin{equation*}
	q_n = 0 \quad \text{ if \,  $\xi_{n-1} < 1$}, \qquad q_n = 1\quad \text{ if \,  $\xi_{n-1} > 1$}
\end{equation*}
and $\xi_n = \frac{1}{\xi_{n-1} - q_n}$.
\end{definition}

The replacement rules
\begin{equation*}
	\ddots + \cfrac{1}{q_{n-2} + \cfrac{1}{1 + \cfrac{1}{0}}} = \ddots + \cfrac{1}{q_{n-2}}, \quad \text{ and } \quad a + \cfrac{1}{0 + \cfrac{1}{b}} =  a+b
\end{equation*}
let us deal with convergents whose final quotient is $0$ or convert between Denjoy and regular continued fractions. If $\xi > 0$, then to convert the regular continued fraction of $\xi$ into a Denjoy one, a process we call \emph{regular-to-Denjoy conversion}, we simply replace each natural partial quotient $q_i$ by the string $1010\ldots01$ containing $q_i-1$ zeroes (and we leave an initial quotient of 0 alone).

From the criterion for periodicity of a regular continued fraction, we obtain:
\begin{theorem}\label{T:conversion}
The sequence of quotients of the Denjoy continued fraction of an irrational number $\xi > 0$ is eventually periodic if and only if $\xi$ is a quadratic irrational.  If the regular continued fraction is purely periodic, then so is the Denjoy one.  Indeed, applying regular-to-Denjoy conversion to the minimal period of the regular continued fraction gives the minimal period of the Denjoy one.
\end{theorem}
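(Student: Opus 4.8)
The plan is to treat regular-to-Denjoy conversion as a local substitution and show that it, together with its inverse, carries periodicity (eventual or pure) back and forth between the two kinds of expansion; the three assertions then reduce to the periodicity criterion for regular continued fractions together with a bookkeeping argument about minimal periods.

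First I would record the explicit shape of the conversion: each natural quotient $q$ is replaced by the block $1(01)^{q-1}$, a string of $q$ ones separated by single zeroes, and these blocks are concatenated. The crucial structural observation is how to read the block decomposition off of a Denjoy string. Listing the $1$'s in order, the gap between two consecutive $1$'s is either empty (the $1$'s are adjacent) or a single $0$; a gap is empty exactly at a junction between two blocks and is a single $0$ exactly inside a block. Hence the empty gaps mark the block boundaries, and $q_i$ equals one more than the number of single-$0$ gaps lying inside block $i$. This ``gap'' description is unambiguous even for runs of consecutive $1$'s, and it shows that the conversion is a bijection whose inverse is again local, so that an eventually periodic sequence on either side corresponds to an eventually periodic sequence on the other.

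With this in hand the first two assertions are quick. If $\xi$ is a quadratic irrational then, by the periodicity criterion, its regular continued fraction is eventually periodic; converting block by block turns the eventual period into a repeating Denjoy block, so the Denjoy expansion is eventually periodic. Conversely, an eventually periodic Denjoy expansion parses, via the gap description, into an eventually periodic block sequence, i.e. an eventually periodic regular continued fraction, whence $\xi$ is a quadratic irrational. For the second assertion, a purely periodic regular continued fraction represents a $\xi > 1$, so there is no leading $0$ to set aside, and converting the minimal period $(c_1, \ldots, c_m)$ produces the block $C = 1(01)^{c_1-1} \cdots 1(01)^{c_m-1}$ whose infinite repetition is precisely the Denjoy expansion; thus the Denjoy expansion is purely periodic with $C$ as a period.

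The heart of the matter, and the step I expect to be the main obstacle, is the minimality in the third assertion: that $C$ is not merely a period but the minimal period of the Denjoy expansion. Here I would invoke the elementary fact that the minimal period length $d$ of a purely periodic sequence divides every period length, so $d \mid |C|$, say $|C| = kd$, and argue that $k = 1$. Suppose $k \geq 2$. Translation by $d$ is an automorphism of the purely periodic Denjoy string, so it carries $1$'s to $1$'s and preserves gap types; in particular it permutes the block boundaries (the empty gaps). Writing the block-boundary positions in increasing order as $\cdots < p_0 < p_1 < \cdots$, the conversion contributes exactly one boundary per block, so $p_{i+m} = p_i + |C|$, while invariance under the shift by $d$ forces $p_{i+r} = p_i + d$ for a fixed integer $r \geq 1$. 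Iterating, $p_{i+kr} = p_i + kd = p_i + |C| = p_{i+m}$, so $kr = m$ and $r = m/k < m$. But the shift also preserves block contents, so $c_{i+r} = c_i$ for all $i$, exhibiting $(c_1, \ldots, c_m)$ as having period $r < m$ and contradicting the minimality of the regular period. Therefore $k = 1$ and $C$ is the minimal Denjoy period. The delicate points to get right are the exact correspondence ``empty gap $\leftrightarrow$ block boundary'' (so that the shift genuinely descends to the block sequence) and the counting identity $kr = m$; the remaining verifications are routine.
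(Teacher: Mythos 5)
Your proposal is correct, and it supplies substantially more than the paper does: the paper states Theorem \ref{T:conversion} as an immediate consequence of the Lagrange periodicity criterion for regular continued fractions and gives no proof at all, so there is no argument to compare against step by step. Your two key ingredients are exactly the content the paper leaves implicit. The ``gap'' description --- empty gaps between consecutive $1$'s mark block boundaries, single-$0$ gaps lie inside blocks --- is the right way to see that regular-to-Denjoy conversion is a local bijection on quotient sequences, which settles both directions of the eventual-periodicity equivalence and the pure-periodicity claim. The minimality argument is the one genuinely nontrivial point, and your treatment is sound: extend the purely periodic Denjoy string to a bi-infinite periodic sequence so that the minimal period $d$ divides the period $|C|$, observe that the shift by $d$ preserves the set of block boundaries and hence acts on the (bi-infinite, strictly increasing) boundary sequence as $p_i \mapsto p_{i+r}$ for a fixed $r \geq 1$, and deduce $kr = m$ from $p_{i+kr} = p_i + |C| = p_{i+m}$, contradicting minimality of the regular period when $k \geq 2$. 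Two small points worth making explicit if you write this up: first, that the formal conversion really does compute the Denjoy expansion of $\xi$ produced by the algorithm in the definition (the paper disposes of this via the displayed replacement rules collapsing $1 + \frac{1}{0 + \frac{1}{b}} = 1 + b$); second, that the fact ``the minimal period divides every period'' for a one-sided purely periodic sequence is justified by first passing to the bi-infinite periodic extension, where the set of periods is a subgroup of $\mathbb{Z}$.
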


We will also need to convert negative infinite continued fractions into regular ones (\emph{negative-to-regular conversion}).    Let us write $(q_1, q_2, \ldots)$ and $[q_1, q_2, \ldots]$ to distinguish sequences of quotients of regular and negative continued fractions.  Thus, an irrational number $\xi$ corresponds to $[q_1, q_2, \ldots]$ when 
\begin{equation*}
	\xi = q_1 - \cfrac{1}{q_2 - \cfrac{1}{\ddots}}.
\end{equation*}
The simple algorithm for converting between negative continued fractions and regular ones is classical \cite{mS1864}.  If $[q_1, q_2, \ldots]$ corresponds to an irrational number $\xi > 1$, then $q_i \geq 2$ for all $i$.  The corresponding regular continued fraction begins $(q_1-1, \ldots)$.  To determine what follows the second term, we count the maximal string of $2$'s following $q_1$ in $[q_1, q_2, \ldots]$ -- say there are $n_1$ of these $2$'s. The term $q_{n_1+2}$ following this string is $\geq 3$, and we let $m_1 = q_{n_1+2} - 2$.  We then count the maximal string of $2$'s following $q_{n_1+2}$ -- say there are $n_2$ of them. We then let $m_2$ be $2$ less than the term $q_{n_1+n_2+3}$ following this string of $2$'s.  Continuing, we obtain sequences $(n_i)$ and $(m_i)$, and $\xi$'s regular continued fraction has quotient sequence $(q_1-1, n_1, m_1, n_2, m_2, \ldots)$.  

\begin{theorem*}[Lagrange, Galois, Zagier \cite{dZ1981}]
If $g = (A,B,C)$ has discriminant $\Delta$ and $\xi = \tfrac{B+\sqrt{\Delta}}{2A}$, then the following are equivalent:
\begin{itemize}
	\item $g \in G^+$
	\item $\xi$ has purely periodic regular continued fraction expansion,
	\item $\xi > 1$ and $-1 < \overline{\xi} < 0$ ($\overline{\xi}$ being the conjugate of $\xi$.
\end{itemize}
Similarly, the following are equivalent
\begin{itemize}
	\item $g \in Z$
	\item $\xi$ has purely periodic negative continued fraction expansion,
	\item $\xi > 1$ and $0 < \overline{\xi} < 1$ .
\end{itemize}
In the first case, we say $\xi$ is \emph{G-reduced}, and in the second we say it is \emph{Z-reduced}.
\end{theorem*}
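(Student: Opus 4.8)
The plan is to split each three-way equivalence into an \emph{algebraic} step, relating the reducedness of $(A,B,C)$ to the inequalities on $\xi$ and $\overline{\xi}$, and an \emph{analytic} step, relating those inequalities to pure periodicity of the relevant continued fraction. Throughout I would use $\xi = \frac{B+\sqrt{\Delta}}{2A}$, $\overline{\xi} = \frac{B-\sqrt{\Delta}}{2A}$, and the four identities $\xi + \overline{\xi} = B/A$, $\xi\overline{\xi} = C/A$, $(\xi - 1)(\overline{\xi} - 1) = (A-B+C)/A$, and $(\xi+1)(\overline{\xi}+1) = (A+B+C)/A$. Since $\sqrt{\Delta} > 0$, the sign of $A$ dictates whether $\xi > \overline{\xi}$ or $\xi < \overline{\xi}$.

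For the algebraic step I would first observe that each prescribed confinement of $\overline{\xi}$ forces $A > 0$: if $A < 0$ then $\xi < \overline{\xi}$, whereas $\xi > 1$ together with either $\overline{\xi} < 0$ or $\overline{\xi} < 1$ gives $\xi > \overline{\xi}$, a contradiction. With $A > 0$ in hand, reading the signs of the four products above translates the conditions directly. In the first case $\xi\overline{\xi} < 0$ yields $C < 0$, $(\xi-1)(\overline{\xi}-1) < 0$ yields $B > A+C$, and $(\xi+1)(\overline{\xi}+1) > 0$ yields $A+B+C > 0$, which together are exactly the defining conditions of $G^+$. In the second case $\xi\overline{\xi} > 0$ yields $C > 0$ and $(\xi-1)(\overline{\xi}-1) < 0$ yields $B > A+C$, whence $A, B, C > 0$, which is exactly $Z$. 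Each implication reverses using that $\xi > \overline{\xi}$ orders the two factors in each product. This is routine, with the only care being which linear inequalities in $\sqrt{\Delta}$ may legitimately be squared.

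The analytic step for the regular continued fraction is Galois's classical theorem: a quadratic irrational has a purely periodic regular expansion precisely when $\xi > 1$ and $-1 < \overline{\xi} < 0$. I would cite this (\cite{fHK2013}), noting that it can also be recovered from the machinery already in place: Property~1 of $\gamma$ gives pure periodicity for $g \in {}_{p}G^+$, scaling by the content removes the primitivity restriction since $\xi$ is unchanged, and Lemma~\ref{L:firstcoefficient} identifies $R_G$ with the regular shift $\xi \mapsto 1/(\xi - \lfloor\xi\rfloor)$; the converse follows because this shift preserves the reduced region, there are finitely many reduced irrationals of a fixed discriminant, and the shift is reversible on them, so eventual periodicity upgrades to pure periodicity.

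For the negative continued fraction I would avoid re-deriving a Galois-type theorem and instead lean on Zagier's reduction theorem quoted above. The key computation is that $R_Z$ \emph{is} the negative continued fraction shift: with $n = \lceil \xi \rceil$ one has
\[
R_Z(A,B,C) = (An^2 - Bn + C,\ 2An - B,\ A),
\]
and the identity $(2An-B)^2 - \Delta = 4A(An^2 - Bn + C)$ shows that the irrationality attached to $R_Z(A,B,C)$ equals $\tfrac{1}{n-\xi}$, the successor of $\xi$ in the negative algorithm. Hence the negative expansion of $\xi$ is purely periodic if and only if the $R_Z$-orbit of $(A,B,C)$ is purely periodic, which by Zagier's theorem holds if and only if $(A,B,C)$ is Z-reduced; composing with the algebraic step closes the second equivalence. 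The main obstacle throughout is upgrading \emph{eventual} periodicity (all that Lagrange's theorem provides from $\xi$ being quadratic) to \emph{pure} periodicity: for the regular case this is the delicate reversibility argument in Galois's theorem, while for the negative case the identity $R_Z = $ negative shift lets Zagier's already-established purely-periodic-orbit theorem supply exactly this, leaving only the verification of that identity and the sign-bookkeeping of the algebra.
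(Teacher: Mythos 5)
The paper does not prove this theorem; it is quoted as a classical result and attributed to Zagier's book \cite{dZ1981}, so there is no in-paper argument to compare against. Your proposal is correct and is essentially the standard proof. The conjugate-product algebra is right: the identities $\xi\overline{\xi}=C/A$, $(\xi\mp 1)(\overline{\xi}\mp 1)=(A\mp B+C)/A$ together with the observation that the sign of $A$ orders $\xi$ and $\overline{\xi}$ do translate $\xi>1$, $-1<\overline{\xi}<0$ into $A>0$, $AC<0$, $B>|A+C|$, and $\xi>1$, $0<\overline{\xi}<1$ into $A,B,C>0$, $B>A+C$ (no squaring of inequalities in $\sqrt{\Delta}$ is actually needed, so that worry is moot). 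Citing Galois for the regular case is appropriate, and your identification of $R_Z$ with the negative-continued-fraction shift is verified by exactly the computation you give: $(2An-B)^2-\Delta=4A(An^2-Bn+C)$ yields $\tfrac{(2An-B)+\sqrt{\Delta}}{2(An^2-Bn+C)}=\tfrac{1}{n-\xi}$ with $n=\lceil\xi\rceil$, which is how Zagier's purely-periodic-orbit theorem transfers to pure periodicity of the expansion. The one step worth making explicit: to go from ``the negative expansion of $\xi$ is purely periodic'' back to ``the $R_Z$-orbit is purely periodic'' you need that $\xi_k=\xi_0$ forces the $k$-th form to equal the original form; this holds because all forms in the orbit have the same discriminant $\Delta$, and for fixed $\Delta$ the form is recovered uniquely from its root ($A$ from the coefficient of $\sqrt{\Delta}$ in $\xi$, then $B$ from the rational part, then $C$ from $\Delta$). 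With that one sentence added, the argument is complete.
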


\begin{remark}
If we say $\xi > 0$ is \emph{D-reduced} if it has purely periodic Denjoy continued fraction expansion. Theorem \ref{T:Denjoy} below shows that if $\xi$ is Z-reduced, then $\xi - 1$ is \emph{D-reduced}.  The above result shows the set of G-reduced irrationalities is strictly contained in the set of these $\xi-1$.  It is not hard to show that that the latter set is strictly contained in the set of D-reduced irrationalities, with the remaining D-reduced numbers being those $\xi$ satisfying $\xi > 1$ and $\overline{\xi} < -1$.
\end{remark}

\begin{theorem}\label{T:Denjoy}
If $(A,B,C) \in Z$ has discriminant $\Delta$, then the quadratic irrationality
\begin{equation*}
	\xi = \frac{B-2A+\sqrt{\Delta}}{2A}
\end{equation*}
has purely periodic Denjoy continued fraction, and the minimal period is obtained from $\sigma((A,B,C))$ by replacing each `0' by `01'.
\end{theorem}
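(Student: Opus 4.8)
The plan is to anchor the theorem on the forms in the image of $\mu$, where it collapses to Property 1 of $\gamma$ together with Theorem \ref{T:conversion}, and then to propagate it around each Zagier cycle using Theorem \ref{T:rotation}. For a Z-reduced form $f = (A,B,C)$ write $\omega_f = \frac{B+\sqrt\Delta}{2A}$ and $\xi_f = \omega_f - 1$, so $\xi_f$ is the number in the statement. Since $\omega_f$ is Z-reduced we have $\omega_f > 1$ and $0 < \overline{\omega_f} < 1$, hence $\xi_f > 0$ and $-1 < \overline{\xi_f} < 0$; moreover $\xi_f > 1$ exactly when $\beta(f)$ (equivalently $\sigma(f)$) begins with $1$, which by Theorem \ref{T:xi} happens exactly when $f \in \mu(G^+)$. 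Let $D(f)$ denote the binary string obtained from $\sigma(f)$ by replacing each $0$ with $01$.

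I would first treat the base case $f = \mu(g)$ with $g = (A_0,B_0,C_0) \in G^+$. Formula \eqref{E:mu} gives $\xi_f = \frac{B_0+\sqrt\Delta}{2A_0}$, the G-reduced irrationality of $g$, so by Property 1 its regular continued fraction is purely periodic with minimal period $\gamma(g) = (p_1,\ldots,p_m)$. Theorem \ref{T:conversion} then makes its Denjoy continued fraction purely periodic with minimal period the regular-to-Denjoy conversion $1(01)^{p_1-1}\cdots 1(01)^{p_m-1}$. On the other hand Theorem \ref{T:xi} gives $\beta(f) = \eta^+\gamma(g) = (1,p_1,\ldots,p_m)$, so $\sigma(f) = 1\,0^{p_1-1}1\cdots 1\,0^{p_m-1}$ and $D(f)$ is literally this same word. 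Hence the theorem holds for every $f \in \mu(G^+)$.

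Next I would establish the propagation step: if $\xi_f$ has purely periodic Denjoy expansion with period $D(f)$, then so does $\xi_{R_Z f}$, with period $D(R_Z f)$. The engine is the effect of $R_Z$ on $\omega_f$, namely $\omega_f \mapsto \frac{1}{n-\omega_f}$ with $n = \lceil\omega_f\rceil$, which in the $\xi$-coordinate reads $\xi_{R_Z f} = \frac{1}{(n-1)-\xi_f}-1$. Recalling that one Denjoy step sends $\zeta$ to $\frac{1}{\zeta-1}$ with digit $1$ when $\zeta > 1$ and to $\frac{1}{\zeta}$ with digit $0$ when $0 < \zeta < 1$, a direct tracking of the integer parts of the iterates shows that this single Zagier step is realized by exactly $r$ Denjoy steps, consuming the digit block $01$ when $n = 2$ and $(10)^{n-3}11$ when $n \geq 3$, where $r = 2$ when $n = 2$ and $r = 2(n-2)$ when $n \geq 3$. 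Since a Denjoy step cyclically rotates a purely periodic Denjoy period by one position, $\xi_{R_Z f}$ is purely periodic with period the $r$-fold rotation of $D(f)$.

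It then remains to match this rotation with Theorem \ref{T:rotation}. Writing $\beta(f) = (q_1,\ldots,q_l)$, the regimes $n = 2$, $(n \geq 3,\ l = 2)$, and $(n \geq 3,\ l > 2)$ correspond to its three cases, and in the latter two one has $q_2 = n - 2$; in each case one checks that replacing $0$ by $01$ in the string-rotation prescribed by Theorem \ref{T:rotation} yields exactly the $r$-fold rotation of $D(f)$ (for short strings the rotation amount is read modulo the period length, which is where $r = 2(n-2)$ reconciles with moving a single leading $1$, and where the fixed point $(1,3,1)$ sits). Because Theorem \ref{T:rotation} shows every cycle reaches a form in $\mu(G^+)$ (leading $0$'s are shifted off until a $1$ appears), induction on the number of $R_Z$-steps from such a base point proves the theorem for all $f \in Z$; minimality is inherited from the base case, where $\gamma(g)$ is primitive for primitive $g$ (Property $1'$) so Theorem \ref{T:conversion} returns a primitive, hence minimal, period and rotations of a primitive word stay primitive, the imprimitive case reducing to this one via Lemma \ref{L:reducetouf}. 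I expect the propagation step to be the crux: pinning down that one Zagier step equals precisely $r$ Denjoy steps with the stated digit block, and reconciling that count with Theorem \ref{T:rotation} in all three cases, including the modular bookkeeping for short periods.
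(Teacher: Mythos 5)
Your proposal is correct, and its key numerical claims check out (one Zagier step with reducing number $n$ is indeed realized by $2$ Denjoy steps consuming $01$ when $n=2$ and by $2(n-2)$ steps consuming $(10)^{n-3}11$ when $n\geq 3$, and these rotation amounts do reconcile with the three cases of Theorem \ref{T:rotation}, including the wrap-around in case (ii)). However, it diverges from the paper after the shared base case. You and the paper both handle $f\in\mu(G^+)$ identically, by reducing to Property 1 of $\gamma$ via Theorem \ref{T:xi} and Theorem \ref{T:conversion}. For the remaining forms (those with $0<\xi<1$, i.e.\ $q_1\geq 2$), the paper does not induct around the cycle: it observes that the reducing number stays equal to $2$ for exactly $q_1-1$ iterations, writes down the single matrix $M=\left[\begin{smallmatrix} q_1 & q_1-1\\ -(q_1-1) & -(q_1-2)\end{smallmatrix}\right]$ carrying $(A,B,C)$ to a form $h=\mu^{-1}$ of the resulting form in $\mu(G^+)$, deduces that $\xi$ has regular continued fraction $(0,\,q_1-1,\,\overline{q_2,\ldots,q_{l-1},q_1+q_l-1})$, and converts that directly to a purely periodic Denjoy expansion. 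Your route instead propagates one $R_Z$-step at a time via the dictionary between a Zagier step and a block of Denjoy steps, then matches the resulting rotation against Theorem \ref{T:rotation}. The paper's argument is shorter and needs no case analysis, but treats the pre-period $(0,q_1-1)$ somewhat magically; your argument is longer and requires verifying the step dictionary and the three-case reconciliation, but it buys a conceptual explanation of \emph{why} the rotation rule of Theorem \ref{T:rotation} has the ``skip a 1'' shape --- it is literally the shadow of the Denjoy shift under the substitution $0\mapsto 01$. One point to tighten if you write this up: your minimality discussion leans on primitivity of $\gamma(g)$ via Property $1'$, whereas Theorem \ref{T:conversion} already converts minimal periods to minimal periods, so you only need Property 1 at the base point (and, as in the paper, a reduction of the imprimitive case along the lines of Lemma \ref{L:reducetouf}).
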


\begin{proof}
  From Zagier's theorem above and the algorithm for negative-to-regular conversion, if $\xi > 1$, then $\tfrac{B+\sqrt{\Delta}}{2A} > 2$ and $\xi$ has a purely periodic regular continued fraction expansion. Theorem \ref{T:conversion} shows $\xi$ has purely periodic Denjoy continued fraction.  

Now since $(A,B,C) \in Z$, then if $\xi > 1$, the theorem of Lagrange-Galois-Zagier shows that $\xi$ is $G$-reduced, and it follows that $(A,B,C) = \mu(g)$, where $\xi$ is the $G$-reduced quadratic irrationality corresponding to $g$. 
 Theorem \ref{T:xi} shows that $\beta((A,B,C)) = (1,\gamma(g))$.  Let $\mathfrak{s}$ be the binary string obtained when we replace each `0' in $\sigma((A,B,C)) = \mathrm{sb}(1,\gamma(g))$ by `01'.  It is then readily checked that applying regular-to-Denjoy conversion to the finite natural string $\gamma(g)$ produces $\mathfrak{s}$.  Property 1 of $\gamma$ and Theorem \ref{T:conversion} then show that the theorem holds in this case.

If $0 < \xi < 1$, then $(A,B,C) \not\in \mu(G^+)$ and $\beta((A,B,C)) = (q_1, \ldots, q_l)$ with $q_1 \geq 2$.  This is equivalent to the reducing number of $(A,B,C)$ being $2$.  With a look at the operation \eqref{E:TZ}, we see that while iterating the reduction operator, we will continue to have a reducing number $2$ through $q_1-1$ iterations, at which point we reach a form in $\mu(G^+)$, say $\mu(h)$.  Thus, $\gamma(h) = (q_2, \ldots, q_{l-1}, q_1+q_l-1)$ and $(A,B,C)$ is carried to $h$ by operating by the matrix
\begin{equation*}
	M = \begin{bmatrix} 2 & 1\\ -1 & 0 \end{bmatrix}^{q_1-1} \begin{bmatrix} 1 & -1\\ 0 & 1 \end{bmatrix} = \begin{bmatrix} q_1 & q_1 - 1 \\ -(q_1 - 1) & -(q_1 - 2) \end{bmatrix}.
\end{equation*}

Now some algebra will show that $\frac{1}{1/\xi - (q_1-1)}$ is the quadratic irrational corresponding to the form obtained from $(A,B,C)$ by applying $M$, i.e., $h$.  It follows that $\frac{1}{1/\xi - (q-1)}$ is $G$-reduced and has purely periodic regular continued fraction expansion with minimal period $\gamma(h) = (q_2, \ldots, q_{l-1}, q_1+q_l-1)$.  Thus, $\xi$ has regular continued fraction $(0, q_1 - 1, \overline{q_2, \ldots, q_{l-1}, q_1 + q_l - 1})$.  Converting using regular-to-Denjoy conversion, we see that $\xi$ has purely periodic Denjoy continued fraction expansion with minimal period 
\begin{equation*}
	(0,1)^{q_1-1}, 1, (0,1)^{q_2 - 1}, 1, (0,1)^{q_3-1}, \ldots, (0,1)^{q_l - 1},
\end{equation*}
(in which $(0,1)^{a}$ is shorthand for the $a$-fold self-concatenation of the string 01).  This is also the sequence obtained from $\sigma((A,B,C)) = \mathrm{sb}((q_1, \ldots, q_l)$ upon replacing each `0' by `01', and the proof is complete.
\end{proof}

 \end{document}